\newtheorem{theorem}{Theorem}
\newtheorem{lemma}[theorem]{Lemma}
\newtheorem*{example}{Example}
\newtheorem*{lemmaB}{Lemma A}
\newtheorem*{theoremB}{Theorem B}
\def\mad{\textrm{mad}}
\def\ceil#1{\lceil#1\rceil}
\begin{document}
\title{Injective colorings of graphs with low average degree}
\author{
Daniel W. Cranston$^{\ast}$  \and 
Seog-Jin Kim$^{\dagger}$ \and 
Gexin Yu$^{\ddagger}$}

\thanks{$^{\ast}$ 
Department of Mathematics \& Applied Mathematics, 
Virginia Commonwealth University, Richmond, VA; DIMACS, Rutgers University, Piscataway, NJ.  Email: \texttt{dcranston@vcu.edu}.}

\thanks{$^{\dagger}$ Konkuk University, Seoul, Korea. Email: \texttt{skim12@konkuk.ac.kr}; corresponding author.  Research supported by the Korea Research Foundation Grant funded by the Korean Government (KRF-2008-313-C00115 )}

\thanks{$^{\ddagger}$ College of William and Mary, Williamsburg, VA.
Email: \texttt{gyu@wm.edu}. Research supported in part by the NSF grant DMS-0852452.}

\maketitle

\begin{abstract}
Let $\mad(G)$ denote the maximum average degree (over all subgraphs) of $G$ and let $\chi_i(G)$ denote the injective chromatic number of $G$.  We prove that if $\Delta\geq 4$ and $\mad(G)<\frac{14}5$, then $\chi_i(G)\leq\Delta+2$.  When $\Delta=3$, we show that $\mad(G)<\frac{36}{13}$ implies $\chi_i(G)\le 5$. In contrast, we give a graph $G$ with $\Delta=3$, $\mad(G)=\frac{36}{13}$, and $\chi_i(G)=6$.  
\end{abstract}

\section{Introduction}
Vertex coloring is one of the central areas of graph theory.
Nearly forty years ago, Karp proved that determining the chromatic number of an arbitrary graph is NP-hard (see~\cite{GJ}).  As a result, much of the work since then has focused on bounding the chromatic number for special classes of graphs and finding efficient algorithms to produce near optimal colorings.  Such results include both the Four Color Theorem \cite{4CT,4CTweb} and the Strong Perfect Graph Theorem \cite{SPGT,SPGT2,SPGT3}.

An \textit{injective coloring} of a graph $G$ is an assignment of colors to the vertices of $G$ so that any two vertices with a common neighbor receive distinct colors.  The \textit{injective chromatic number}, $\chi_i(G)$, is the minimum number of colors needed for an injective coloring.  Injective colorings were introduced by Hahn et al. \cite{hkss}, who
showed applications of the injective chromatic number of the hypercube in the theory of error-correcting codes.  

It's natural to look for relationships between the injective chromatic number, $\chi_i(G)$, and the (standard) chromatic number, $\chi(G)$.
With this goal in mind, we define the \textit{neighboring graph $G^{(2)}$} to be the graph with the same vertex set as $G$ and with its edge set given by $E(G^{(2)})=\{uv:\mbox{vertices $u$ and $v$}$ $\mbox{have a common neighbor in $G$}\}$.  
Note that $\chi_i(G) = \chi(G^{(2)})\le \chi(G^2)$; recall that $V(G^2) = V(G)$ and $uv\in E(G^2)$ if $\rm{dist}(u,v)\le 2$.  

The chromatic number of $G^2$ has important applications in Steganography \cite{FL}, which is the study of hiding messages in other media in a way so that no one, apart from the sender and desired receiver, suspects the presence of a message.  The chromatic number of $G^2$ has also been studied extensively in the case where $G$ is a planar graph~\cite{hm,ms}.  Since injective coloring is a special case of standard vertex coloring, it is natural to ask if determining the injective chromatic number of an arbitrary graph is NP-hard.  Hahn et al.~showed that it is.  So (much like standard coloring), we focus our efforts on bounding the injective chromatic number for special classes of graphs and finding efficient algorithms to produce near optimal colorings.

Since all the neighbors of a common vertex must receive distinct colors,
it is easy to see that $\chi_i(G)\ge \Delta(G)$, where $\Delta(G)$ is the maximum degree of $G$.  When the context is clear, we will simply write $\Delta$.   Many people are interested in graphs with relatively small injective chromatic number (at most $\Delta+c$ for some constant $c$).  One natural candidate for such a family of graphs is planar graphs or, more generally, sparse graphs \cite{hkss, hrw, lst}.  Let  {\it\mad(G)} denote the {\it maximum average degree} (over all subgraphs) of $G$.  We call a class $\mathcal G$ of graphs $\it sparse$ if there exists a constant $k$ such that for all $G\in \mathcal{G}$, we have the inequality $\mad(G) < k$.  An easy application of Euler's formula shows that for every planar graph $G$, we have $\mad(G)<\frac{2g}{g-2}$, where $g$ is the girth of $G$ (the length of its shortest cycle).   

In \cite{dhr}, Doyon, Hahn, and Raspaud showed that for a graph $G$ with maximum degree $\Delta$, the following three results hold:  if $\mad(G) <\frac{14}5$, then $\chi_i(G)\leq\Delta+3$; if $\mad(G) < 3$, then $\chi_i(G)\leq\Delta+4$; and if $\mad(G)<\frac{10}3$, then $\chi_i(G)\leq\Delta+8$.   

In~\cite{cky} the present authors improved some bounds given in~\cite{dhr} and~\cite{lst} in certain cases; specifically, we studied sufficient conditions to imply $\chi_i(G)=\Delta$ and $\chi_i(G)\leq\Delta+1$.  In the current paper, we study conditions such that $\chi_i(G)\leq\Delta+2$.   Our main result is the following theorem.

\begin{theorem}
Let $G$ be a graph with maximum degree $\Delta\geq 4$.  If $\mad(G) < \frac{14}5$, then $\chi_i(G)\leq\Delta+2$.
\label{mainthm}
\end{theorem}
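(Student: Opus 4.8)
The plan is to argue by contradiction using the discharging method, which is the standard tool for this family of $\mad$ results. Suppose $G$ is a counterexample with the fewest vertices, so $\Delta(G)\le\Delta$, $\mad(G)<\frac{14}5$, but $\chi_i(G)>\Delta+2$; we may assume $G$ is connected. The minimality of $G$ means every proper subgraph is injectively $(\Delta+2)$-colorable. First I would establish a list of \emph{reducible configurations}: small substructures that cannot occur in $G$. The basic ones are the obvious low-degree obstructions. For instance, $G$ has no vertex of degree $1$ (delete it, extend the coloring — its unique neighbor imposes at most $\Delta-1$ constraints through that neighbor). More care is needed for degree-$2$ vertices: if $v$ has degree $2$ with neighbors $u,w$, then in an injective coloring $v$ must avoid the colors of the other neighbors of $u$ and of $w$, which is at most $(\Delta-1)+(\Delta-1)=2\Delta-2$ colors, and when $\Delta\ge4$ this can exceed $\Delta+2$, so a bare degree-$2$ vertex need not be reducible by itself. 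The key reducible configurations will therefore involve degree-$2$ vertices whose neighbors also have low degree, or adjacent/near-adjacent pairs of $2$-vertices; I would catalog these by deleting the configuration, injectively coloring the smaller graph, and counting the forbidden colors at each uncolored vertex to show the coloring extends (greedily, or via a short Hall-type / alternating argument when several vertices must be colored simultaneously).

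Next comes the discharging phase. Assign to each vertex $v$ the charge $d(v)$; the hypothesis $\mad(G)<\frac{14}5$ gives $\sum_{v} d(v)=2|E(G)|<\frac{14}5|V(G)|$, so the average charge is less than $\frac{14}5$. I would then move charge so that every vertex ends with charge at least $\frac{14}5$, contradicting the global bound. The natural discharging rule for the $\frac{14}5$ threshold is to have each vertex of degree $\ge3$ send $\frac15$ (or a similar fraction) along each incident edge, or more precisely send charge to nearby $2$-vertices: a vertex of degree $d\ge3$ has charge $d\ge3>\frac{14}5$ and a surplus of $d-\frac{14}5$, while a $2$-vertex has a deficit of $\frac{14}5-2=\frac45$ and must collect $\frac45$ total, e.g. $\frac25$ from each side. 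I would tune the exact rule (perhaps sending charge through $2$-vertices along threads to the first vertex of degree $\ge3$ on each side) so that the reducibility list exactly rules out the cases where a vertex of degree $\ge3$ would be left with too little.

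The verification that discharging succeeds splits by the degree of $v$. For $d(v)=2$: $v$ has two neighbors; if both neighbors had small enough degree (or if $v$ lay on a long thread of $2$-vertices), that configuration would be reducible, so each side delivers enough charge and $v$ reaches $\frac{14}5$. For $d(v)=3$: $v$ starts with $3$ and sends out a small fixed amount per incident $2$-vertex; the reducible configurations will bound how many $2$-neighbors (and near-$2$-vertices) a $3$-vertex can have, keeping its final charge $\ge\frac{14}5$; this case, balancing a $3$-vertex with several demanding $2$-neighbors, is where the constant $\frac{14}5$ is tight and is the delicate bookkeeping step. For $d(v)\ge4$: the surplus $d(v)-\frac{14}5\ge\frac65$ is large and easily covers the charge sent out. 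The main obstacle will be designing the reducibility list and the discharging rule in tandem so that they mesh exactly at $\frac{14}5$: one must find enough reducible configurations (especially configurations of adjacent $2$- and $3$-vertices) that every non-reducible local picture around a vertex of degree $2$ or $3$ ends with charge at least $\frac{14}5$, with no slack to spare.
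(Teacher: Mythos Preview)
Your outline captures the paper's approach only for the easy case $\Delta\ge 6$; there is a genuine gap when $\Delta\in\{4,5\}$, and it lies exactly where you say things are easy.  You write that for $d(v)\ge 4$ ``the surplus $d(v)-\frac{14}5\ge\frac65$ is large and easily covers the charge sent out.''  But a $4$-vertex adjacent to four $2$-vertices must send $4\cdot\frac25=\frac85$ and ends with charge $\frac{12}5<\frac{14}5$.  This configuration is \emph{not} reducible by the greedy counting you describe: with $\Delta=4$ each adjacent $2$-vertex $u_i$ sees the three other $u_j$'s (through $v$) plus up to $\Delta-1=3$ neighbors of its other endpoint, for a total of $6=\Delta+2$ forbidden colors, so there is no slack.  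The same problem arises for $4$-vertices with three $2$-neighbors and one $3$-neighbor, and (when $\Delta=5$) for certain $5$-vertices.  So the ``delicate bookkeeping'' is at degree $4$ and $5$, not degree $3$.

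The paper handles this with machinery well beyond tuning local rules.  It runs a first discharging phase that leaves only certain $4$-vertices poor, then builds an auxiliary graph $H$ whose edges record $2$-vertex bridges between $4^+$-vertices (splitting vertices of high $G^{(2)}$-degree into leaves).  A second discharging phase sends charge through a ``bank'': leaves of $H$ deposit $\frac15$, and the poor $4$-vertices withdraw.  If the bank balance is nonnegative, discharging succeeds.  If not, some component of $H$ has average degree $>2$ and hence contains a cycle plus a high-degree vertex; the corresponding even cycle in $G$ (together with a short pendant path) gives an arbitrarily large reducible configuration $K$.  Reducing $K$ is not greedy: one uncolors $K$, observes that $K^{(2)}$ splits into two components each of which is degree-choosable, and invokes Vizing's lemma and the Erd\H{o}s--Rubin--Taylor characterization of degree-choosable graphs to extend the coloring.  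None of this---the auxiliary graph, the bank, the unbounded reducible configurations, or the list-coloring lemmas---appears in your sketch, and a purely local rule/reducibility list of the kind you propose cannot close the gap at $\Delta=4,5$.
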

\vspace{-.1in}

%Note that for $\Delta=3$, we have graphs with $\chi_i(G)=6$, even with $\mad(G)=\frac{36}{13}$. 
In contrast,
%For $\Delta=3$, 
the following graph $G$ has $\Delta(G)=3$ and $\chi_i(G)=6$, but has only $\mad(G)=\frac{36}{13}$. 

\begin{example}
Let $G$ be the incidence graph of the Fano Plane.  Observe that $G$ is 3-regular, bipartite, and vertex-transitive.  Consider $H = G-v$, where $v$ is an arbitrary vertex.  To see that $\chi_i(H) = 6$, we only need to note that the vertices in the part of size 6 form a clique in $H^{(2)}$, but the vertices in the part of size 7 do not.  
\smallskip

\label{example1}
\begin{figure}[htb]
\begin{tikzpicture}[line width=1pt, scale = .70] 
\tikzstyle vertexG=[circle,fill=black,minimum size=10pt]
\foreach \a/\b/\c in {1/2/4, 2/3/5, 3/4/6, 4/5/7, 5/6/1, 6/7/2}
{
\draw (\a,-1) node[vertexG] {} -- (\a,1) node[vertexG] {} -- (\b,-1) node[vertexG] {} -- (\a,1) -- (\c,-1)node[vertexG] {};
}
\end{tikzpicture}
\caption{Graph $H$: the incidence graph of the Fano plane, with one vertex deleted.  The deleted vertex was in the top part and was incident to the first, third, and seventh vertices of the bottom part.  If a graph $G$ contains $H$ as a subgraph, then $\chi_i(G)\ge 6$.}
\end{figure}
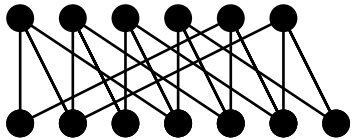
\end{example}

We will show that one cannot construct a graph $G$ with $\Delta(G)=3$, $\chi_i(G)=6$, and $\mad(G)<\frac{36}{13}$. 

\begin{theorem}
If $\Delta=3$ and $\mad(G) < \frac{36}{13}$, then $\chi_i(G)\leq 5$.
\label{Delta3}
\end{theorem}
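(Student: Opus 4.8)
The plan is a standard minimal-counterexample plus discharging argument, but with the extra care injective colorings always demand. Suppose the theorem fails and let $G$ be a counterexample with $|V(G)|+|E(G)|$ minimum. If $\Delta(G)\le 2$ then $G$ is a union of paths and cycles and $\chi_i(G)\le 3$, and if $G$ is disconnected some component is a smaller counterexample; so we may assume $G$ is connected with $\Delta(G)=3$. By minimality, every graph $H$ with $\Delta(H)\le 3$, $\mad(H)<\frac{36}{13}$, and $|V(H)|+|E(H)|<|V(G)|+|E(G)|$ satisfies $\chi_i(H)\le 5$. Every vertex of $G$ has degree $2$ or $3$, so writing $n_i$ for the number of vertices of degree $i$, it suffices to contradict the inequality $2|E(G)|<\frac{36}{13}|V(G)|$ (which holds since $\mad(G)<\frac{36}{13}$ applied to $G$ itself); equivalently, I want to prove $n_3\ge\frac{10}{3}n_2$.

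The first phase is to rule out a list of \emph{reducible configurations}, each handled the same way: delete the configuration to get a smaller graph $G'$, note $\Delta(G')\le 3$ and $\mad(G')\le\mad(G)<\frac{36}{13}$ (deleting vertices cannot raise $\mad$, so this is automatic), invoke minimality for an injective $5$-coloring of $G'$, and extend it to $G$. Two elementary facts drive this: a vertex of degree at most $1$ has at most two vertices at distance $2$ and so is always colorable; and when a degree-$2$ vertex $v$ with neighbors $a,b$ is deleted and reinserted, the only \emph{new} constraint among surviving vertices is that $a$ and $b$ must get distinct colors (their common neighbor $v$ was removed). From these I would derive: (R1) $\delta(G)\ge 2$; (R2) $G$ has no two adjacent degree-$2$ vertices --- delete both, observe no surviving pair gains a common neighbor, and finish greedily since each deleted vertex sees at most $3$ forbidden colors --- so every degree-$2$ vertex has two degree-$3$ neighbors; (R3) no degree-$3$ vertex is adjacent to two degree-$2$ vertices --- delete two such neighbors, dropping the central vertex to degree $\le 1$ so it may be recolored to differ from the (at most two) outer degree-$3$ vertices, then color the deleted vertices one at a time --- so degree-$2$ vertices lie at pairwise distance $\ge 3$ and each degree-$3$ vertex has at most one degree-$2$ neighbor. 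These alone give only $n_2\le\frac12 n_3$, so the bulk of the work is several further, more technical configurations bounding how degree-$2$ vertices may cluster (at pairwise distance $3$, and around degree-$3$ vertices whose other neighbors also carry degree-$2$ neighbors) until $n_3\ge\frac{10}{3}n_2$ is forced.

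The main obstacle is precisely the reductions built on a lone degree-$2$ vertex $v$ both of whose degree-$3$ neighbors survive deletion: the forced inequality $c(a)\ne c(b)$ need not hold in a coloring handed to us by minimality, and each of $a,b$ now has degree $2$ and up to four vertices at distance $2$, so it cannot simply be recolored. Resolving these needs either recoloring a genuinely low-degree vertex created by the deletion, or explicit case analysis over the at most four colors $v$ sees. Assembling reducible configurations strong enough to force $n_3\ge\frac{10}{3}n_2$ --- while keeping every reduction within the $\mad$ budget --- is the crux, and it must be razor-thin: the Fano example of Example~\ref{example1} has $\mad=\frac{36}{13}$ and $\chi_i=6$, so there is no slack.

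The second phase is discharging. Assign each vertex $v$ the charge $\mu(v)=\deg(v)$, so the total charge is $2|E(G)|<\frac{36}{13}|V(G)|$. A degree-$3$ vertex has surplus $\frac{3}{13}$ and a degree-$2$ vertex has deficit $\frac{10}{13}$; since a degree-$2$ vertex has only two neighbors, each able to spare just $\frac{3}{13}$, it must also draw charge from degree-$3$ vertices at distance $2$ (through its two neighbors, up to four of them). I would use rules of the shape ``each degree-$3$ vertex gives a fixed amount to each degree-$2$ vertex at distance $1$ and a smaller amount to each degree-$2$ vertex at distance $2$,'' with the fractions calibrated against the worst local pictures left by the reducible configurations, so that afterward every degree-$2$ vertex has charge at least $\frac{36}{13}$ and no degree-$3$ vertex falls below $\frac{36}{13}$. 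Summing gives total charge $\ge\frac{36}{13}|V(G)|$, contradicting the bound above and finishing the proof; tuning these constants so that all cases clear exactly is the delicate part of the discharging, again because the Fano example leaves nothing to spare.
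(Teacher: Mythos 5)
Your framework is the right one and matches the paper's: minimal counterexample, the reducible configurations ``no $1^-$-vertex,'' ``no adjacent 2-vertices,'' ``no 3-vertex with two 2-neighbors,'' the target inequality $n_3\ge\frac{10}{3}n_2$ (equivalently, average degree at least $\frac{36}{13}$), and a discharging scheme sending charge from 3-vertices to 2-vertices at distances 1 and 2. But the proposal stops exactly where the proof begins: you write that ``several further, more technical configurations'' are needed and that ``assembling reducible configurations strong enough to force $n_3\ge\frac{10}{3}n_2$ \dots is the crux,'' and likewise that the discharging constants remain to be ``tuned.'' Those two items are the entire content of the theorem, and neither is supplied. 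For the record, the paper needs only \emph{one} more configuration, (RC4): two adjacent 3-vertices each adjacent to a (possibly distinct) 2-vertex. Together with (RC2) and (RC3), this forces every 3-vertex to fall into one of two clean cases --- either it has one adjacent 2-vertex and \emph{no} 2-vertex at distance two, or it has no adjacent 2-vertex and at most three 2-vertices at distance two --- and the rules ``give $\frac{3}{13}$ to each adjacent 2-vertex and $\frac{1}{13}$ to each distance-two 2-vertex (but $\frac{2}{13}$ when they lie on a common 4-cycle, since that 3-vertex then occupies two of the four distance-two slots)'' make every vertex finish with charge exactly $\frac{36}{13}$. Without identifying (RC4) and these constants, the argument is a plan rather than a proof.

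A smaller remark on the obstacle you flag, namely that after deleting a lone 2-vertex $v$ the surviving neighbors $a,b$ may have been given the same color: this never arises in the paper because every reduction that deletes a 2-vertex also deletes at least one of its neighbors (in (RC2) the adjacent 2-vertex, in (RC3) and (RC4) the attached 3-vertex). Any pair of vertices that acquires a new common neighbor upon reinsertion therefore contains a deleted (hence recolorable) vertex, and the constraint is absorbed into the forbidden-color counts, each of which stays below the budget of $\Delta+2=5$. So your proposed workarounds (recoloring surviving vertices, case analysis over colors) are unnecessary, but recognizing that the deletion sets must be chosen to include a neighbor of each deleted 2-vertex is part of what you would need to make explicit.
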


Hahn, Raspaud, and Wang \cite{hrw} conjectured that every planar graph $G$ with maximum degree $\Delta$ has $\chi_i(G)\le \lceil \frac{3\Delta}{2}\rceil$.   For $\Delta=3$,  the conjecture says that $\chi_i(G)\le 5$. So, by the application of Euler's Formula mentioned above, Theorem~\ref{Delta3} implies that this conjecture is true when the girth of $G$ is at least $8$. 

The rest of the paper is organized as follows.  In Section 2, we introduce reducible configurations, and, as a warmup, we prove Theorem~\ref{Delta3}.  In Section 3,  we prove Theorem~\ref{mainthm}, via the three cases $\Delta\ge 6$, $\Delta=4$, and $\Delta=5$.

\section{Reducible Configurations and Proof of Theorem~\ref{Delta3}}

Before we give the proofs, we will give a brief overview of the discharging method.  First, we introduce our notation.
A $k$-vertex is a vertex of degree $k$; a $k^+$- and a $k^-$-vertex have degree at least and at most $k$, respectively.  A {\it thread} is a path with 2-vertices in its interior and $3^+$-vertices as its endpoints.  A $k$-thread has $k$ interior 2-vertices.  If a $3^+$-vertex $u$ is the endpoint of a thread containing a 2-vertex $v$, then we say that $v$ is a {\it nearby vertex} of $u$ and vice versa.
We write $N_2[u]$ to denote the vertex set consisting of $u$ and its adjacent 2-vertices.

A {\it block} is a maximal 2-connected subgraph.  A {\it list-assignment} is an assignment of (possibly distinct) lists of allowable colors to the vertices of a graph.  A {\it list-coloring} is a proper coloring of the graph such that each vertex $v$ receives a color $c(v)$ from its list $L(v)$.  (The name list-coloring is misleading, since the order of the colors in each ``list'' does not matter at all.)  The language of list-coloring is the natural choice when we are trying to extend a partial coloring of the graph, since the colored vertices may forbid distinct sets of colors on distinct vertices.

All of our proofs rely on the techniques of reducibility and discharging (often these are simply called ``the discharging method'').  
This method was central to the proof of the Four Color Theorem in the 1970s, but has only become widely used in the last 15 years.  Most discharging proofs follow the same pattern.
%We start with a minimal counterexample $G$ to the theorem we are proving.  
We assume, for contradiction, that the theorem we want to prove is false, and we choose graph $G$ to be a minimal counterexample.
We show that $G$ cannot contain certain configurations; we call such a forbidden configuration a {\it reducible configuration}\footnote{A configuration is an induced subgraph, together with prescribed degrees in $G$ for each vertex of the subgraph; a simple example of a forbidden configuration is adjacent 2-vertices.}.
In the {\it discharging phase}, we use a counting argument to show that every supposed minimal counterexample must contain a reducible configuration; this yields a contradiction.

It is useful to observe that a proof by ``minimal counterexample'' (together with the discharging method) is simply a convenient way of rephrasing a proof by induction.  
Thus, when we say, ``by minimality, $G-v$ has an injective coloring with $\Delta+2$ colors'' we are essentially invoking the induction hypothesis.
The advantage of using a minimal counterexample is that we need not specify a base case.  So each reducible configuration represents an inductive step, and our discharging phase shows that in every case at least one of the inductive steps is applicable.  
%We should point out that the subgraph $K$, 
%is, in fact, an additional reducible configuration.  We explain this point further below.

The proofs of Theorem~\ref{Delta3} and Lemma~\ref{main6} are straightforward applications of the discharging method.  However, the proofs of Lemmas~\ref{main4} and~\ref{main5} are more technical.  We use an auxiliary graph $H$; our proofs are based on the idea of 2-alternating cycles, which were introduced by Borodin~\cite{borodin, borodin2} and later extended to 3-alternators by Borodin, Kostochka, and Woodall~\cite{BKW}.
In contrast to earlier reducible configurations, Borodin's 2-alternating cycles (and the subsequent 3-alternators) are infinite classes of reducible configurations, and thus contain reducible configurations that are arbitrarily large.  

This innovation enabled Borodin to get the best known bounds on certain problems related to edge list-coloring.
Our auxiliary graph $H$ allows us to take a similar approach. 
Our subgraph $K$, which arises when analyzing the second discharging phase in the proofs of Lemmas~\ref{main4} and~\ref{main5}, 
is an additional reducible configuration, and it can be arbitrarily large.
All of our proofs yield simple algorithms that produce the desired coloring.
We give more details about such algorithms following the proof of Lemma~\ref{main4}.
%%%% I think the obvious implementation is O(n^2), and a "not-too-hard" improvement gets O(n log n).  I believe it's actually O(n), but don't know how to prove it.

\begin{proof}[Proof of Theorem~\ref{Delta3}:]
Assume that $G$ is a minimal counterexample to  Theorem~\ref{Delta3}, that is, $\mad(G)<\frac{36}{13}$, $\Delta(G)=3$, and $\chi_i(G)>\Delta(G)+2$.  The following four configurations are reducible. 

\medskip

\begin{enumerate}
\item[(RC1)] a 1-vertex.
\item[(RC2)] adjacent 2-vertices.
\item[(RC3)] a 3-vertex adjacent to two 2-vertices. 
\item[(RC4)] adjacent 3-vertices that are each adjacent to a 2-vertex.
\end{enumerate}

\medskip

Now we show that (RC1) -- (RC4) are reducible configurations. In later proofs, when $\Delta > 3$, we will often use the same reducible configurations.  So, here we give proofs that do not use the fact $\Delta=3$, but instead simply assume that every vertex has a list of available colors of size $\Delta+2$. 

 (RC1): Let $v$ be a 1-vertex.  By the minimality of $G$, we can color $G-v$.  Since $v$ has at most $\Delta-1$ colors forbidden, we can extend the coloring to $G$. 
 
 (RC2): Let $u$ and $v$ be adjacent 2-vertices.  By the minimality of $G$, we can color $G\setminus\{u,v\}$.  Again, we can extend the coloring to $G$, since each of $u$ and $v$ has at most $(\Delta-1)+1$ colors forbidden. 
 
 (RC3): Let $u$ be a 3-vertex adjacent to 2-vertices $v$ and $w$, and let $S=\{u,v,w\}$.  By minimality, we can color $G\setminus S$.  Note that $u$ has at most $(\Delta-1)+1+1$ colors forbidden and $v$ and $w$ each have at most $(\Delta-1)+1$ colors forbidden.  Thus, we can extend the coloring to $G$.
 
 (RC4): Let $u_1$ and $u_2$ be adjacent 3-vertices and $v_1$ and $v_2$ be 2-vertices such that $v_i$ is adjacent to $u_i$, and let $S=\{u_1,u_2,v_1,v_2\}$.  By the minimality of $G$, we can color $G\setminus S$.  Note that $u_1$ and $u_2$ each have at most $(\Delta-1)+1+1$ colors forbidden, since the $v_i$'s are uncolored.  After coloring the $u_i$'s, each $v_i$ has at most $(\Delta-1)+1+1$ colors forbidden.  Hence, we can extend the coloring to $G$.

\bigskip

Now we begin the discharging phase; recall that $\Delta(G)=3$.  Our goal is to show that if $G$ has none of the forbidden configurations (RC1) -- (RC4), then $\mad(G)\ge \frac{36}{13}$ (which is a contradiction).
We assign to each vertex $v$ an initial charge $\mu(v)=d(v)$.  We then redistribute this charge by the following two discharging rules:
\smallskip

\begin{enumerate}
\item[(R1)] Each 3-vertex gives charge $\frac3{13}$ to each adjacent 2-vertex.
\item[(R2)] Each 3-vertex gives charge $\frac1{13}$ to each distance-2 2-vertex\\ (unless they lie together on a 4-cycle, in which case the 3-vertex gives $\frac2{13}$ to the 2-vertex).
\end{enumerate}
\medskip

Now we verify that, after discharging, each vertex has charge at least $\frac{36}{13}$. (We write $\mu^*(v)$ to denote the charge at $v$ after applying the discharging rules.)
\smallskip

Recall that $G$ contains no 1-vertex and observe that (RC2) and (RC3) imply that all vertices that are distance at most two from a 2-vertex must be 3-vertices.
By (RC4), no 2-vertex lies on a 3-cycle.  Furthermore, if a 2-vertex $v$ lies on a 4-cycle with a 3-vertex $u$ at distance 2, then $v$ receives $\frac2{13}$ from $u$, rather than just $\frac1{13}$.

Thus, for every 2-vertex $v$, we have $\mu^*(v) = 2 + 2(\frac{3}{13})+4(\frac1{13})=\frac{36}{13}$.

Now we consider 3-vertices.  Note that (RC2), (RC3), and (RC4) together imply that a 3-vertex $v$ cannot have 2-vertices at both distance 1 and 2.  Further, either $v$ has no adjacent 2-vertices and at most three distance-2 2-vertices, or else $v$ has at most one adjacent 2-vertex and no distance-2 2-vertices.  In the first case, we have $\mu^*(v)\geq 3 - 3(\frac1{13}) = \frac{36}{13}$.
In the second case, we have $\mu^*(v)\geq 3 - \frac{3}{13} = \frac{36}{13}$.

Thus, we have $\sum_{v\in V(G)}\frac{36}{13}\le \sum_{v\in V(G)}\mu^*(v) = \sum_{v\in V(G)}\mu(v) = \sum_{v\in V(G)}d(v)$.  Hence, the average degree is at least $\frac{36}{13}$.  This contradiction completes the proof.
\end{proof}

\section{Proof of Theorem~\ref{mainthm}}

To prove Theorem~\ref{mainthm}, we consider separately the cases $\Delta = 4$, $\Delta=5$, and $\Delta\geq 6$.  The proof when $\Delta\geq6$ is similar to the proof of Theorem~\ref{Delta3}, so we consider it first.

\begin{lemma}
If $\Delta\geq 6$ and $\mad(G)<\frac{14}5$, then $\chi_i(G) \leq \Delta + 2$.
\label{main6}
\end{lemma}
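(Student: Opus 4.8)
The plan is to follow the same template as the proof of Theorem~\ref{Delta3}: assume $G$ is a minimal counterexample, so $\Delta(G) \geq 6$, $\mad(G) < \frac{14}{5}$, and $\chi_i(G) > \Delta+2$. First I would establish a list of reducible configurations. Since the discharge threshold here is $\frac{14}{5}$ rather than $\frac{36}{13}$, the slack is larger, so I expect the reducible configurations can be coarser. I would reuse (RC1)--(RC4) from Theorem~\ref{Delta3}, whose reducibility proofs were deliberately written to assume only that every vertex has a list of $\Delta+2$ available colors (not that $\Delta=3$), so they transfer verbatim. The key new point when $\Delta \geq 6$ is that a vertex of small degree has lots of colors to spare: when we delete a small set $S$ and greedily recolor, a vertex $v \in S$ of degree $d$ forbids at most $d-1$ colors from each of its $\leq d$ neighbors' other neighbors plus up to $d-1$ for uncolored vertices of $S$ — but since $\Delta \geq 6$, even configurations like ``a $2$-vertex at distance $2$ from another $2$-vertex'' or ``a $3$-vertex adjacent to a $2$-vertex that is adjacent to\ldots'' should be reducible. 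I would aim for something like: (RC1) a $1$-vertex; (RC2) adjacent $2$-vertices; (RC3) a $3^-$-vertex with two nearby $2$-vertices; and possibly a $k$-thread with $k \geq 2$ is reducible. The precise list should be calibrated so the discharging goes through.

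Next comes the discharging phase. Assign each vertex $v$ initial charge $\mu(v) = d(v)$; the goal is to redistribute so that every vertex ends with charge $\geq \frac{14}{5}$, contradicting $\mad(G) < \frac{14}{5}$. Only $2$-vertices have initial charge below $\frac{14}{5}$ (namely $2$, a deficit of $\frac{4}{5}$), and only they need to receive charge. Every other vertex has degree $\geq 3$, and in fact if it has no nearby $2$-vertex it already exceeds the threshold. So the rules should move charge from $3^+$-vertices toward $2$-vertices. A natural choice: (R1) each $3^+$-vertex sends $\frac{2}{5}$ to each adjacent $2$-vertex; (R2) each $3^+$-vertex sends some smaller amount to $2$-vertices at distance $2$ (routed through a common neighbor or along a thread). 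Then a $2$-vertex with two $3^+$-neighbors gets $2 \cdot \frac{2}{5} = \frac{4}{5}$ and reaches exactly $\frac{14}{5}$; a $2$-vertex on a $2$-thread (adjacent to another $2$-vertex) is excluded by (RC2). I would then check the $3^+$-vertices: a $d$-vertex with $d \geq 3$ starts at $d$ and, by the reducible configurations, can have only a bounded number of nearby or distance-$2$ $2$-vertices relative to its degree; since $d \geq 6$ gives generous slack, verifying $\mu^*(v) \geq \frac{14}{5}$ in each case should be routine arithmetic, splitting on whether $v$ has degree $3$, $4$, $5$, or $\geq 6$ and how many $2$-vertices it feeds.

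The main obstacle — though a mild one here, which is presumably why this lemma is the ``warmup'' among the $\Delta \geq 4$ cases — is getting the bookkeeping right for $3$-vertices, since they have the least slack ($3$ versus $\frac{14}{5}$, a cushion of only $\frac{1}{5}$). A $3$-vertex can afford to give away at most $\frac{1}{5}$ total, so I must ensure the reducible configurations forbid a $3$-vertex from being adjacent to a $2$-vertex unless it has enough extra room — e.g.\ (RC3)/(RC4)-type restrictions forcing that a $3$-vertex adjacent to a $2$-vertex has its other neighbors of high degree and no further nearby $2$-vertices — and then choose the distance-$2$ transfer amount small enough (perhaps $\frac{1}{5}$ split appropriately, or charge sent only from $4^+$-vertices) that no $3$-vertex is overdrawn. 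Once the rule amounts are pinned down so that the tight cases ($2$-vertices with exactly two $3$-neighbors, and $3$-vertices feeding a single $2$-vertex) land exactly at $\frac{14}{5}$, the contradiction follows immediately from summing charges, exactly as in the proof of Theorem~\ref{Delta3}.
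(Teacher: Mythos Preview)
Your framework is right (minimal counterexample, reducible configurations, initial charge $\mu(v)=d(v)$, target $\frac{14}{5}$), and your rule (R1) matches the paper exactly. But the proposal has a genuine gap in identifying \emph{which} vertices are deficient after (R1) and hence where the remaining charge must flow.

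After (R1) alone, every $2$-vertex already sits at exactly $2+2\cdot\frac{2}{5}=\frac{14}{5}$, so your proposed (R2) --- sending additional charge to $2$-vertices at distance $2$ --- is aimed at vertices that no longer need help. The vertices that \emph{are} below threshold after (R1) are (i) a $3$-vertex with a $2$-neighbor, which drops to $3-\frac{2}{5}=\frac{13}{5}$, and (ii) a $4$-vertex with four $2$-neighbors, which drops to $4-4\cdot\frac{2}{5}=\frac{12}{5}$. You partially anticipate case (i) at the end, but the fix you sketch (restricting what a $3$-vertex can give, or sending charge past it to distant $2$-vertices) does not raise the $3$-vertex itself back to $\frac{14}{5}$. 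The paper handles this with a reducible configuration forcing such a $3$-vertex to have a neighbor of degree at least $\lceil\frac{\Delta+3}{2}\rceil$, and a rule sending $\frac{2}{5}$ from that high-degree neighbor \emph{to the $3$-vertex}.

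Case (ii) is entirely missing from your plan, and it is the trickier one: a $4$-vertex surrounded by four $2$-vertices has a deficit of $\frac{2}{5}$, and nothing in (RC1)--(RC4) forbids this. The paper introduces a further reducible configuration (a $4$-vertex with four $2$-neighbors one of whose other neighbors has degree below $\Delta$) together with a third discharging rule that pushes excess charge from $\Delta$-vertices, through the intermediate $2$-vertices, to that needy $4$-vertex. Without an analog of this pair, your discharging argument cannot close: the ``routine arithmetic'' you anticipate for degree-$4$ vertices will fail precisely here.
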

\begin{proof}
Suppose the lemma is false; let $G$ be a minimal counterexample.
The following five configurations are reducible.
Proofs for (RC1) -- (RC3) are given in the proof of Theorem~\ref{Delta3}.  Proofs for (RC4) and (RC5) are straightforward and left to the reader.
\bigskip

\begin{enumerate}
\item[(RC1)] a 1-vertex.
\item[(RC2)] adjacent 2-vertices.
\item[(RC3)] a 3-vertex adjacent to two or three 2-vertices.
\item[(RC4)] a 3-vertex adjacent to a 2-vertex and to neighbors $x$ and $y$ with $d(x)+d(y)\leq\Delta+2$.
\item[(RC5)] a 4-vertex adjacent to four 2-vertices such that one of these 2-vertices has other neighbor with degree less than $\Delta$.
\end{enumerate}
\medskip

We use the initial charge $\mu(v) = d(v)$ and the following discharging rules.
\smallskip

\begin{enumerate}
\item[(R1)] each $3^+$-vertex gives charge $\frac25$ to each adjacent 2-vertex.
\item[(R2)] each vertex with degree at least $\ceil{\frac{\Delta+3}2}$ gives charge $\frac25$ to each adjacent 3-vertex or 4-vertex.
\item[(R3)] Suppose that a $4^+$-vertex $v$ is adjacent to $k$ 2-vertices and, after applying rules (R1) and (R2), vertex $v$ has charge $\frac{14}5+l$ (where $l>0$).  For each adjacent 2-vertex $u$, vertex $v$ gives charge $\frac{l}k$ to the other neighbor of $u$.
\end{enumerate}
\medskip

First observe that after applying rules (R1) and (R2), a vertex $v$ has excess charge at least $d(v) - \frac25d(v)-\frac{14}5$; so each vertex $u$ that receives charge from a vertex $v$ by (R3) receives (from $v$) a charge of at least $\frac35-\frac{14}{5d(v)}$.  Note that the application of rule (R3) will never take a vertex from having charge at least $\frac{14}5$ to having charge less than $\frac{14}5$.  Thus, when verifying that each vertex finishes with charge at least $\frac{14}5$, we need not consider charge given away by rule (R3).

Now we verify that all vertices have charge at least $\frac{14}5$.
\smallskip

2-vertex: $\mu^*(v)\geq 2 + 2(\frac25) = \frac{14}5$.

3-vertex: Note that by (RC3) vertex $v$ is adjacent to at most one 2-vertex.
If $v$ is adjacent to zero 2-vertices, then $\mu^*(v) = \mu(v) = 3$.
If $v$ is adjacent to one 2-vertex, then by (RC4) $v$ also has some neighbor with degree at least $\ceil{\frac{\Delta+3}2}$.  So by rule (R2), $\mu^*(v)\geq 3 - \frac25 + \frac25 = 3$.

4-vertex: If $v$ is adjacent to at most three 2-vertices, then $\mu^*(v)\geq 4 - 3(\frac25) = \frac{14}5$.  If $v$ is adjacent to four 2-vertices, then by (RC5), the other neighbor of each adjacent 2-vertex must be a $\Delta$-vertex.  Hence, $\mu^*(v)\geq 4 - 4(\frac25) + 4(\frac35 - \frac{14}{5(6)}) > \frac{14}5$.

$5^+$-vertex: $\mu^*(v)\geq d(v) - \frac25d(v) = \frac35d(v)\geq 3$.
\end{proof}

Now we consider the cases when $\Delta\in \{4,5\}$.  In the proofs thus far, we have extended partial colorings to uncolored vertices simply by counting the number of colors forbidden on an uncolored vertex, and noting that this number is smaller than $\Delta+2$ (the number of colors we can use).  To prove Lemmas~\ref{main4} and \ref{main5}, we need a more subtle argument.  Before, we only cared about how many colors were available at each uncolored vertex.  Now, we also care which colors are available.  We write $L(v)$ to denote the set of colors available at vertex $v$, given a specified partial coloring.
We will need the following two fundamental results on list coloring.

\begin{lemmaB}[Vizing~\cite{vizing}]
\label{vizinglemma}
For a connected graph $G$, let $L$ be a list assignment such that $|L(v)|\geq d(v)$ for all $v$.
(a) If $|L(y)| > d(y)$ for some vertex $y$, then $G$ is $L$-colorable.
(b) If $G$ is 2-connected and the lists are not all identical, then $G$ is $L$-colorable.
\end{lemmaB}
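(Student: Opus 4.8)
The plan is to prove part (a) directly by a greedy coloring along a well-chosen vertex ordering, and then to deduce part (b) from part (a) by coloring and deleting a single vertex; this is the standard argument, and it is short enough that most of it appears already in the sketch.

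For part (a), I would fix a spanning tree $T$ of the connected graph $G$ rooted at $y$, and order $V(G)=\{v_1,\dots,v_n\}$ by non-increasing distance from $y$ in $T$, so that $v_n=y$ and, for each $i<n$, the $T$-parent of $v_i$ (which is a neighbor of $v_i$ in $G$) occurs later in the order. Then I would color $v_1,v_2,\dots,v_n$ greedily, always choosing a color from the current vertex's list that is not yet used on one of its neighbors. For $i<n$, at the moment $v_i$ is colored it has at least one uncolored neighbor (its $T$-parent), so at most $d(v_i)-1<d(v_i)\le|L(v_i)|$ colors are forbidden and a legal color exists. When we reach $y$, at most $d(y)<|L(y)|$ colors are forbidden, so $y$ can also be colored, completing an $L$-coloring.

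For part (b), I would first observe that since the lists are not all identical and $G$ is connected, walking along a path between two vertices with different lists produces an edge $xy$ with $L(x)\ne L(y)$; after relabeling if necessary, choose $c\in L(x)\setminus L(y)$. Now color $x$ with $c$ and pass to $G'=G-x$, which is connected because $G$ is $2$-connected. Define $L'(v)=L(v)\setminus\{c\}$ when $v$ is a neighbor of $x$ and $L'(v)=L(v)$ otherwise. Then $|L'(v)|\ge|L(v)|-1\ge d_G(v)-1=d_{G'}(v)$ for every $v$ adjacent to $x$, and $|L'(v)|=|L(v)|\ge d_G(v)=d_{G'}(v)$ for the rest; moreover, since $c\notin L(y)$ we get $|L'(y)|=|L(y)|\ge d_G(y)=d_{G'}(y)+1>d_{G'}(y)$. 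So $(G',L')$ meets the hypotheses of part (a) with $y$ as the vertex of excess, whence $G'$ has an $L'$-coloring; together with the color $c$ on $x$ this is a proper $L$-coloring of $G$, since every neighbor of $x$ received a color other than $c$.

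There is essentially no hard step here. The one place that needs care is the bookkeeping in part (b) showing that the hypotheses of part (a) are genuinely recovered for $(G',L')$ — in particular that $y$ still carries a list strictly larger than its degree in $G'$, which is exactly why $c$ is chosen outside $L(y)$. The role of $2$-connectedness is only to guarantee that $G-x$ remains connected so that part (a) applies; if $G$ were merely connected this step could fail at a cut vertex, which is precisely why the stronger hypothesis is needed in (b). Note also that this route avoids any appeal to the Gallai-tree (complete-graph/odd-cycle) structure theory, which would only be required for the corresponding statement about general connected graphs.
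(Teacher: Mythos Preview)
The paper does not prove Lemma~A at all; it is quoted as a known result with a citation to Vizing, and is used only as a black box in the proofs of Lemmas~\ref{main4} and~\ref{main5}. So there is no ``paper's own proof'' to compare against.

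That said, your argument is correct and is the standard one. Part~(a) is exactly the usual greedy-along-a-spanning-tree proof, and the bookkeeping is right: every $v_i$ with $i<n$ has its $T$-parent still uncolored, so at most $d(v_i)-1$ colors are forbidden, and at $y$ you use the strict inequality. For part~(b), the reduction to part~(a) is handled cleanly; the two points that matter --- that $G-x$ stays connected (this is where $2$-connectedness is used) and that $|L'(y)|>d_{G'}(y)$ (this is why $c$ is chosen outside $L(y)$) --- are both explicitly addressed. The ``relabeling if necessary'' clause takes care of the case $L(x)\subsetneq L(y)$, where $L(x)\setminus L(y)$ would be empty. There is no gap.
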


A graph is {\it degree-choosable} if it can be colored from its list assignment $L$ whenever $|L(v)|=d(v)$ for every vertex $v$.

\begin{theoremB}[Erd\H{o}s-Rubin-Taylor ~\cite{ERT}]
\label{west}
A graph $G$ fails to be degree-choosable if and only if every block is a complete graph or an odd cycle.
\end{theoremB}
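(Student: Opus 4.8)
This is the Erd\H{o}s--Rubin--Taylor theorem; call a connected graph all of whose blocks are complete graphs or odd cycles a \emph{Gallai tree}. I would assume throughout that $G$ is connected (the statement really is about connected graphs) and prove the two implications separately.

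For the direction ``every block of $G$ is a complete graph or an odd cycle $\Rightarrow$ $G$ is not degree-choosable,'' the plan is to build, by induction on the number of blocks, a list assignment $L$ with $|L(v)|=d(v)$ for all $v$ admitting no proper coloring. If $G$ is a single block, then $G=K_n$ or $G=C_{2k+1}$, and assigning every vertex the list $\{1,\dots,n-1\}$ (respectively $\{1,2\}$) works, since $G$ needs one more color than is available. If $G$ has at least two blocks, take an end-block $B$ with cut vertex $v$ and set $G'=G-(V(B)\setminus\{v\})$; by induction $G'$ has a bad assignment $L'$, and after relabelling colors I may take $L'(v)$ to be any prescribed set $A$ with $|A|=d_{G'}(v)$. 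Now choose a set $B'$ of $d_B(v)$ fresh colors, put $L(v)=A\cup B'$ and $L(u)=B'$ for every $u\in V(B)\setminus\{v\}$, and keep $L=L'$ elsewhere; all list sizes are then correct. In any proper coloring of $G$ the color of $v$ lies in $A$, forcing a bad $L'$-coloring of $G'$, or in $B'$, forcing a proper coloring of $B$ using only the $d_B(v)$ colors of $B'$, which is impossible for $K_n$ or $C_{2k+1}$; either way we reach a contradiction.

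For the converse I would prove the contrapositive: if $G$ is connected and has a block $B$ that is not a complete graph and not an odd cycle, then $G$ is degree-choosable. Given $L$ with $|L(v)|=d(v)$, first reduce to $B$: order $V(G)$ so that $V(B)$ comes first and every later vertex has an earlier neighbor (possible since $G$ is connected), then color the vertices outside $B$ from last to first --- each has an uncolored earlier neighbor, hence at most $d(v)-1$ colored neighbors and a free color. This leaves each vertex of $B$ with a list of size at least its degree \emph{within} $B$, so it suffices to prove: if $B$ is $2$-connected, not complete, and not an odd cycle, and $|L(v)|\ge d_B(v)$ for all $v$, then $B$ is $L$-colorable. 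Here I would induct on $|V(B)|$. If some list has surplus, Lemma~A(a) finishes it. Otherwise every list equals the degree; if $B$ is a cycle it is an even cycle, which I would color directly: if the lists are all identical, properly $2$-color $B$; otherwise find an edge $uv$ with $a\in L(u)\setminus L(v)$, color $u$ with $a$, delete $a$ from $u$'s other neighbor, and color the resulting path greedily toward $v$ (which keeps two available colors and only its path-neighbor to avoid).

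The remaining case --- $B$ is $2$-connected, not a cycle, not complete, with all lists tight --- is the main obstacle. Such a $B$ has at least two vertices of degree $\ge 3$, and the plan is to exploit $2$-connectivity to find a small reducible structure inside $B$: an induced even cycle if one exists (peel outward as above and invoke the even-cycle case); or, when every even cycle is chorded, a theta-type or chorded-cycle subgraph on which one colors a well-chosen vertex pair to create surplus; or a vertex $v$ with $B-v$ still $2$-connected and not complete or an odd cycle, to which the induction hypothesis applies before extending back to $v$ by arranging two of its neighbors to share a color; together with a short list of small exceptional graphs (such as $K_n$ minus an edge) verified by direct case analysis. Pinning down this structural dichotomy --- in particular controlling how chords split cycles into pieces of the ``wrong'' parity, which is exactly what produces the complete-graph and odd-cycle exceptions --- is the technical heart of the Erd\H{o}s--Rubin--Taylor argument; the block induction, the greedy reduction to $B$, and the even-cycle case are all routine.
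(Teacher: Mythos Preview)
The paper does not prove Theorem~B at all: it is quoted as a classical result of Erd\H{o}s, Rubin, and Taylor (with a citation to~\cite{ERT}) and is used only as a black-box tool in the proofs of Lemmas~\ref{main4} and~\ref{main5}. There is therefore no proof in the paper to compare your attempt against.

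As for the sketch itself: the ``Gallai tree $\Rightarrow$ not degree-choosable'' direction and the greedy reduction to a single $2$-connected block $B$ are both sound and standard. You correctly identify that the substantive content lies in showing that a $2$-connected graph which is neither complete nor an odd cycle is degree-choosable, but your treatment of that case is only a plan, not a proof: you list several possible reducible structures (induced even cycle, theta, removable vertex) without establishing that at least one must occur, and the case analysis you gesture at (``together with a short list of small exceptional graphs'') is exactly where the work is. The usual clean way through is Rubin's lemma---every $2$-connected graph that is not a cycle or a clique contains an induced even cycle with at most one chord (a ``theta'' or even cycle)---after which the coloring argument is short. Without that structural lemma or an equivalent, your converse direction is incomplete.
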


\begin{lemma}
If $\Delta(G)=4$ and $\mad(G) < \frac{14}5$, then $\chi_i(G)\leq 6$.
\label{main4}
\end{lemma}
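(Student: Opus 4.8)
The plan is to argue by minimal counterexample, as in the proof of Theorem~\ref{Delta3}, but now with the list-coloring machinery of Lemma~A and Theorem~B available. Suppose the lemma is false and let $G$ be a minimal counterexample, so $\Delta(G)=4$, $\mad(G)<\frac{14}{5}$, and $\chi_i(G)>6$; note we have $\Delta+2=6$ colors to work with. First I would carry over the ``local'' reducible configurations from Theorem~\ref{Delta3} — a 1-vertex, adjacent 2-vertices, and a 3-vertex with two 2-neighbors are reducible by the same counting, since those arguments only used that each uncolored vertex receives a list of $6$ colors. To these I would add configurations special to $\Delta=4$: roughly, a 2-vertex whose two neighbors have small total degree, a short thread whose endpoints are not both 4-vertices, and a 4-vertex carrying too many nearby 2-vertices whose far thread-endpoints are light. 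The new ingredient in proving these is to delete a carefully chosen set $S$ of 2- and 3-vertices, injectively color $G-S$ by minimality, and then observe that the vertices of $S$ must be colored from their residual lists inside the ``conflict graph'' $D$ that $S$ inherits from $G^{(2)}$. Using the earlier reductions one shows $|L(v)|\ge d_D(v)$ for every $v\in S$; then Lemma~A(a) finishes if some list is strictly larger, and Lemma~A(b) or Theorem~B finish if $D$ has a 2-connected block that is neither a clique nor an odd cycle (or the lists on such a block are not all equal). So $\chi_i(G)>6$ forces $D$, hence the local structure of $G$ near $S$, to be extremely rigid, which is exactly what excludes the configuration.

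The substance of the argument is an infinite family of reducible configurations coming from Borodin's 2-alternating cycles. I would form an auxiliary graph $H$ on the ``dangerous'' 2-vertices of $G$ — those whose two neighbors are both 4-vertices, so that such a 2-vertex can lose all $2(\Delta-1)=6$ colors — together with those 4-vertices, joining each dangerous 2-vertex to its two neighbors. By construction $H$ is bipartite with the 2-vertices on one side, so every cycle of $H$ is even. If $H$ contained a cycle, then deleting the dangerous 2-vertices on it and injectively coloring the rest leaves each such 2-vertex a list of size at least $2$, and these lists sit on an even cycle in the conflict graph, which is $2$-choosable — a contradiction. Hence $H$ is a forest. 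The ``leaves'' of this forest, namely 4-vertices adjacent to few dangerous 2-vertices, either yield a further reducible configuration or carry a charge surplus; pursuing this, the deficient 4-vertices together with the threads joining them form a bounded-width subgraph $K$ (which can be arbitrarily long), and I would show $K$ itself is reducible by one global application of Lemma~A and Theorem~B to the conflict graph of all of $K$'s 2-vertices simultaneously. This step — isolating $K$ correctly and proving that its conflict graph always has either a vertex of surplus list or a block that is neither a clique nor an odd cycle — is the one I expect to be the main obstacle, and it is the analogue of the corresponding step in Lemma~\ref{main5}.

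Finally I would run the discharging. Set $\mu(v)=d(v)$ with target $\frac{14}{5}$; the natural rules are that each $3^+$-vertex sends $\frac{2}{5}$ to each adjacent 2-vertex, and each 4-vertex distributes its surplus toward nearby light vertices and across threads, calibrated so that every 2-vertex reaches $2+2\cdot\frac{2}{5}=\frac{14}{5}$ and every 3-vertex — which by the reductions has at most one 2-neighbor and then a rich neighbor to draw from — ends at $3$. The 4-vertices are the pinch point: one adjacent to four 2-vertices drops to $4-4\cdot\frac{2}{5}=\frac{12}{5}$, so the reductions must force the far endpoints of those threads to pump enough charge back. When the first round leaves some 4-vertices below $\frac{14}{5}$, a second discharging phase redistributes charge among these vertices and along the threads joining them; the subgraph they induce is exactly the $K$ above, whose reducibility gives the final contradiction with $\mad(G)<\frac{14}{5}$. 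Thus the skeleton is: simple counting reductions, then reductions based on Lemma~A and Theorem~B, then the 2-alternating-cycle family together with its relative $K$, then two-phase discharging — with the genuinely hard part being the identification and reducibility of $K$.
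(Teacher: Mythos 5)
Your overall architecture (local counting reductions, an auxiliary graph built from the 2-vertices, an arbitrarily large reducible configuration handled via Lemma~A and Theorem~B, and a two-phase discharging with the 4-vertices as the pinch point) matches the paper's in spirit. But there is a genuine gap at the step where you declare the auxiliary graph to be a forest. You claim that any cycle of $H$ is reducible because the dangerous 2-vertices on it, after the rest of the graph is colored, retain lists of size at least $2$ and ``sit on an even cycle in the conflict graph, which is $2$-choosable.'' The parity here is misapplied: your $H$-cycle alternates 2-vertices and 4-vertices and so has even length $2k$, but the cycle that the $k$ deleted 2-vertices induce in $G^{(2)}$ has length $k$, which can be odd. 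For instance, a $6$-cycle $w_1u_1w_2u_2w_3u_3$ in $G$ with the $w_i$ of degree~$4$ and the $u_i$ of degree~$2$ gives a triangle on $\{u_1,u_2,u_3\}$ in $G^{(2)}$; if each $u_i$ ends up with the same list of size exactly $2$, the triangle cannot be colored. So cycles in $H$ are \emph{not} reducible on their own, and you cannot conclude that $H$ is a forest.

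The paper's proof lives with cycles in $H$. It only extracts a contradiction when the global ``surplus'' accounting fails, which forces some component of $H$ to contain both a cycle $C$ and a 4-vertex $u$ adjacent (in $G$) to three or four 2-vertices; such a $u$ has degree strictly less than $6$ in the relevant subgraph of $G^{(2)}$, and that strict slack is exactly what lets Lemma~A(a) color the component of $K^{(2)}$ containing $u$, while the attachment of the path from $C'$ to $u$ creates, in the other component of $K^{(2)}$, a block that is neither a clique nor an odd cycle, so Theorem~B colors it. Without the vertex $u$ you have no vertex with $|L(v)|>d_{K^{(2)}}(v)$ and no non-degenerate block, and both tools fail -- which is precisely why the even cycle alone is not a reducible configuration. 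To repair your proof you would need to replace ``$H$ is a forest'' with a surplus/average-degree argument on the components of $H$ that pairs each cycle with a charge-demanding 4-vertex before invoking reducibility; your discharging rules would also need the paper's extra edges of $H$ coming from 3-vertices (its rule (H2)) and the vertex-splitting rule (H3) to make the leaf-counting come out right.
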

\begin{proof}
Suppose the lemma is false; let $G$ be a minimal counterexample.
The following five configurations are reducible.
Proofs for (RC1) -- (RC3) are given in the proof of Theorem~\ref{Delta3}.  Proofs for (RC4) and (RC5) are straightforward and left to the reader.

\begin{enumerate}
\item[(RC1)] a 1-vertex.
\item[(RC2)] adjacent 2-vertices.
\item[(RC3)] a 3-vertex adjacent to two or three 2-vertices.
\item[(RC4)] a 3-vertex adjacent to one 2-vertex and two 3-vertices.
\item[(RC5)] adjacent 3-vertices with each 3-vertex also adjacent to a (possibly distinct) 2-vertex.
\end{enumerate}
\medskip

We again use the initial charge $\mu(v)=d(v)$.
In our first discharging phase, we apply the following two discharging rules:

\begin{enumerate}
\item[(R1.1)] Every $3^+$-vertex gives charge $\frac25$ to each adjacent 2-vertex.
\item[(R1.2)] If $u$ is a 3-vertex adjacent to a 4-vertex $v$ and a 2-vertex, then $v$ gives $\frac15$ to $u$.
\end{enumerate}
\medskip

We consider the charges after the first discharging phase.
\bigskip

2-vertex: 
Configurations (RC1) and (RC2) together imply that each neighbor of a 2-vertex is a $3^+$-vertex.
Thus, $\mu^*(v) = 2 + 2(\frac25) = \frac{14}5$.

3-vertex: If $v$ is adjacent to a 2-vertex, then by (RC4) $v$ is also adjacent to a 4-vertex, so $\mu^*(v)\geq 3 - \frac25 + \frac15 = \frac{14}5$.  Otherwise, $\mu^*(v) = \mu(v) = 3$.

4-vertex: $\mu^*(v) \geq 4 - 4(\frac25) = \frac{12}5$.
\bigskip

Note that every 2-vertex and 3-vertex has charge at least $\frac{14}5$, but 4-vertices can have insufficient charge.  We now construct an auxiliary graph $H$.  
Our aim, in constructing $H$, is to find extra charge to give to the needy 4-vertices.
Graph $H$ will not contain all the vertices of $G$, but $H$ will contain every vertex of $G$ that has charge less than $\frac{14}5$ after the first discharging phase; $H$ will also contain some of the other vertices.

If $H$ is acyclic, then we will show how to complete the discharging argument.  If we cannot complete the discharging argument, then we will use $H$ to show that $G$ contains a reducible configuration.  More specifically, we construct $H$ so that every cycle in $H$ corresponds to an even cycle in $G$ in which each vertex $v$ satisfies $d_{G^{(2)}}(v)\leq 6$; we show if we cannot complete the discharging argument, then one of these even cycles in $G$ is contained in a reducible configuration.

%Each vertex of degree 1\ in $H$ will contribute $\frac15$ to a bank (these will be vertices with excess charge after the initial discharging phase).  Each vertex that has insufficient charge after the initial discharging phase will have degree 3 or 4\ in $H$, and will receive the necessary charge from the bank.  
For convenience, we introduce a subgraph $\widehat{G}^{(2)}$ of $G^{(2)}$.  We form $\widehat{G}^{(2)}$ from $G^{(2)}$ by deleting all 2-vertices of $G$ that have degree at most 5\ in $G^{(2)}$; we can greedily color these vertices after all others.  Hence, it suffices to properly color $\widehat{G}^{(2)}$.  We denote the degree of a vertex $v$ in $\widehat{G}^{(2)}$ by $\widehat{d}(v)$.  We construct $H$ by the three following rules.  We apply rule 3 {\it after} applying rules 1 and 2 everywhere that they are applicable.
\bigskip
\begin{enumerate}
\item[(H1)] If $u$ is a 2-vertex adjacent (in $G$) to vertices $v$ and $w$, then $v,w\in V(H)$ and $vw\in E(H)$.

\item[(H2)] If $u$ is a 3-vertex adjacent (in $G$) to a 3-vertex $v$ and also adjacent to a 2-vertex, then $u,v\in V(H)$ and $uv\in E(H)$.

\item[(H3)] If $v\in V(H)$ and $\widehat{d}(v)\geq 7$, then for each vertex $u$ adjacent to $v$ in $H$ we create a new vertex $v_u$ in $H$ that is adjacent only to vertex $u$; finally, we delete vertex $v$.  (We show below that this rule can only apply when $d_G(v)=4$ and $d_H(v)=2$.)
\end{enumerate}

\begin{figure}[htb]
\scalebox{.8}{
\def\eps{.5}
\def\dot{circle (5pt)}
\begin{tikzpicture}[line width = .02in, scale=1.0]
% "rule" column
\begin{scope}[xshift=-2.5cm]
\draw (0,0) node {\Large{\bf{(H1)}}};
\draw (0,-2cm) node {\Large{\bf{(H2)}}};
\draw (0,-5cm) node {\Large{\bf{(H3)}}};
\end{scope}
% "G" Column heading
\draw (1,1.5) node {\LARGE{$G$}};
% line 1 of G column
\filldraw (-\eps,0) -- (0,0) -- (0,\eps) -- (0,0) \dot -- (1,0) \dot -- (2,0) \dot -- (2,\eps) -- (2,0) -- (2+\eps,0);
\draw[dashed] (0,0) -- (-\eps,\eps) (2,0) -- (2+\eps,\eps);
\draw (0,-.5) node {\Large{$v$}} (1,-.5) node {\Large{$u$}} (2,-.5) node {\Large{$w$}};
\draw (3.6,0) node {\LARGE{$\Longrightarrow$}};
% line 2 of G column
\begin{scope}[yshift=-2cm]
\filldraw (-\eps,0) -- (0,0) -- (0,\eps) -- (0,0) \dot -- (1,0) \dot -- (1,\eps) -- (1,0) -- (2,0) \dot -- (2+\eps,0); 
\draw (0,-.5) node {\Large{$v$}} (1,-.5) node {\Large{$u$}} (2,-.5) node {\Large{$w$}};
\draw (3.6,0) node {\LARGE{$\Longrightarrow$}};
\end{scope}
% line 3 of G column
\begin{scope}[yshift=-5cm]
\filldraw (-.8,\eps) -- (-.8,-\eps) -- (-.8,0) \dot -- (.1,0) \dot -- 
(1,0) \dot -- (1,1) \dot -- (1-\eps,1) -- (1,1) -- (1,1+\eps) -- (1,1) -- (1+\eps,1) -- (1,1) 
(1,0) \dot -- (1,-1) \dot -- (1-\eps,-1) -- (1,-1) -- 
(1+\eps,-1) -- (1,-1) (1,0) 
-- (1.9,0) \dot -- (2.8,0)\dot -- (2.8,\eps) -- (2.8,-\eps); 
\draw[dashed] (1,-1.1-\eps) -- (1,-1);
\draw[dashed] (-.8-\eps,0) -- (-.8,0);
\draw[dashed] (2.8+\eps,0) -- (2.8,0);
\draw (1.3,-.3) node {\Large{$v$}}; 
\draw (-.5,-.3) node {\Large{$u$}}; 
\draw (3.1,-.3) node {\Large{$w$}}; 
\end{scope}
% shift right for "H" column
\begin{scope}[xshift = 5.5cm]
% "H" column heading
\draw (.5,1.5) node {\LARGE{$H$}};
% line 1 of H column
\filldraw (0,0) \dot -- (1,0) \dot;
\draw (0,-.5) node {\Large{$v$}} (1,-.5) node {\Large{$w$}};
% line 2 of H column
\begin{scope}[yshift= -2cm]
\filldraw (0,0) \dot -- (1,0) \dot;
\draw (0,-.5) node {\Large{$v$}} (1,-.5) node {\Large{$u$}};
\end{scope}
\begin{scope}[yshift= -4cm]
\filldraw (-1,0) \dot -- (.5,0) \dot -- (2,0) \dot;
\draw (-1,-.5) node {\Large{$u$}} (.5,-.5) node {\Large{$v$}} (2,-.5) node {\Large{$w$}} (.5,-1.3) node {\LARGE{$\Downarrow$}};
\begin{scope}[yshift=-2cm]
\filldraw (-1,0) \dot -- (0,0) \dot  (1,0) \dot -- (2,0) \dot;
\draw (-1,-.5) node {\Large{$u$}} (0,-.5) node {\Large{$v_u$}} (1,-.5) node {\Large{$v_w$}} (2,-.5) node {\Large{$w$}};
\end{scope}
\end{scope}
\end{scope}
\end{tikzpicture}
}
\caption{Rules (H1), (H2), and (H3), for building the auxiliary graph $H$.}
\end{figure}
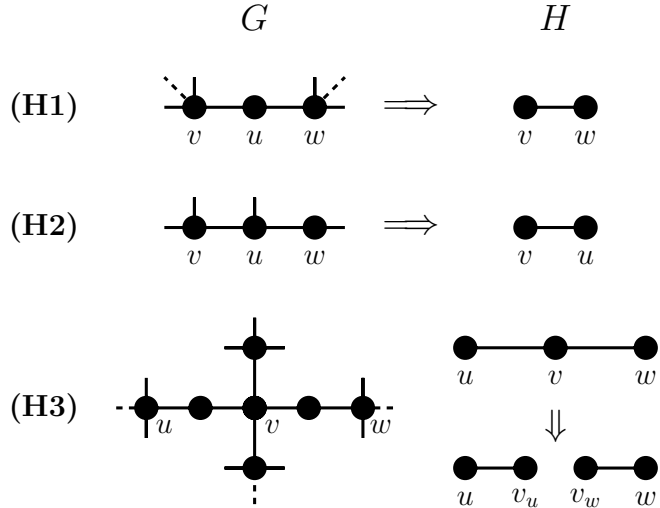

%\subsection{Negative Surplus}~\\
Here we verify the parenthetical claim in (H3).  Note that if $v$ is adjacent in $G$ to three or more 2-vertices, then $\widehat{d}(v)\le 6$.  In contrast if $v$ is adjacent to at most one 2-vertex, then $d_H(v)=1$, so we have nothing to split.  Hence, $v$ must be adjacent to exactly two 2-vertices.  Now the facts that $\widehat{d}(v)\ge 7$ and that $\Delta=4$ imply that $d_G(v)=4$.  
\bigskip

Now we have a second discharging phase, with the following three rules:  
\begin{enumerate}
\item[(R2.1)] Each vertex of degree 1\ in $H$ gives a charge of $\frac15$ to the bank.  
(So, if $v$ was replaced by two vertices, $v_u$ and $v_w$, by rule (H3), then $v$ gives a charge of $\frac25$ to the bank.)
\item[(R2.2)] If a vertex $v$ is in $H$ and in $G$ vertex $v$ is adjacent to three vertices of degree 2 and a vertex of degree 3, then the bank gives $v$ a charge of $\frac15$.  
\item[(R2.3)] If a vertex $v$ is in $H$ and in $G$ vertex $v$ is adjacent to four vertices of degree 2, then the bank gives $v$ a charge of $\frac25$.  
\end{enumerate}

Let $V_{2,2,2,3}$ denote the number of 4-vertices in $G$ that are adjacent to three vertices of degree 2 and one vertex of degree 3; similarly, let $V_{2,2,2,2}$ denote the number of 4-vertices in $G$ that are adjacent to four vertices of degree 2.  Let $\ell$ denote the number of leaves in $H$.  At the end of the second discharging phase, the bank has a charge equal to $\frac15(\ell - V_{2,2,2,3} - 2V_{2,2,2,2})$; we call this charge the {\it surplus}.  We will show that if the surplus is negative, then $G$ contains a reducible configuration and if the surplus is nonnegative, then every vertex of $G$ has charge at least $\frac{14}5$ (which contradicts $\mad(G)<\frac{14}5$).

First, we assume the surplus is negative.
Note that if the surplus is negative, then it must be negative when restricted to some component $J$ of $H$.  Observe that each vertex counted by $V_{2,2,2,3}$ has degree 3\ in $H$ and each counted by $V_{2,2,2,2}$ has degree 4\ in $H$.  Thus, if the surplus is negative when restricted to $J$, then $J$ has average degree greater than 2.  Hence, $J$ contains a cycle $C$ and at least one vertex $u$ counted by either $V_{2,2,2,3}$ or $V_{2,2,2,2}$.  
Recall that $N_2[u]$ is the set consisting of vertex $u$ and all adjacent 2-vertices.  By the minimality of $G$, we have an injective 6-coloring of $G\setminus N_2[u]$ (note that $(G\setminus N_2[u])^{(2)} = G^{(2)}\setminus N_2[u]$); equivalently, this is a proper coloring of $G^{(2)}\setminus N_2[u]$. 

Let $C'$ be the shortest cycle in $G$ that contains all the vertices of $V(C)$ in the order in which they appear in $C$; thus, $V(C')$ contains $V(C)$, as well as some additional 2-vertices and possibly 3-vertices.
Let $K$ be the subgraph of $G$ consisting of $C'$ and a shortest path from $C'$ to $u$ (including $u$); if $u$ lies on $C'$, then we also include in $K$ a 2-vertex that is adjacent to $u$, but that is not responsible for any edge of $C$.
Our proper coloring of $G^{(2)}\setminus N_2[u]$ can naturally be restricted to a proper coloring of $\widehat{G}^{(2)}\setminus N_2[u]$.  We will first modify the coloring of $\widehat{G}^{(2)}\setminus N_2[u]$ to get a proper coloring of $\widehat{G}^{(2)}-V(K)$, then show how to extend this coloring to $\widehat{G}^{(2)}$.
We call these objectives our first and second goals.

If $u$ lies on $C'$, then at most one vertex $w$ of $N_2[u]$ is not in $K$.  Beginning with our coloring of $\widehat{G}^{(2)}\setminus N_2[u]$, we greedily color $w$, then uncolor the vertices of $K$; this yields a coloring of $\widehat{G}^{(2)}-V(K)$.  Thus, if $u$ lies on $C'$, we achieve our first goal.
It's important to notice (and we explain it further in the next paragraph) that $C'$ is an even cycle (in $G$), and hence $V(C')$ forms two disjoint cycles in $\widehat{G}^{(2)}$.  
Thus $K^{(2)}$ consists of two components.
We call the component of $K^{(2)}$ that includes $x$ the \textit{first component} and we call the other component of $K^{(2)}$ the \textit{second component}.  

%The key observation to see that $C'$ is is that, 
To see that $C'$ is an even cycle, note the following.
Due to (RC5), if $C'$ contains an edge created by (H2), then $C'$ contains two successive such edges, yet $C'$ must not contain three successive such edges, since this would force an instance of (RC3) or (RC4).  In contrast (to edges created by (H2)), each edge in $H$ on $C$ that was created by (H1) corresponds to two adjacent edges in $G$ on $C'$.

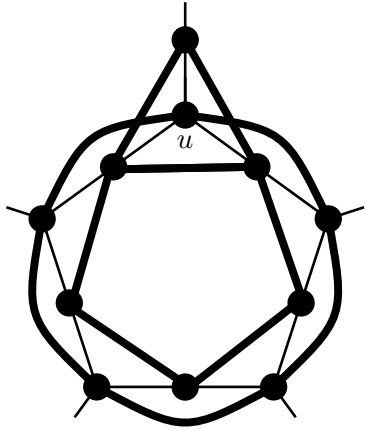
\begin{figure}[htb]
\scalebox{1}{
\def\eps{.5}
\def\dot{circle (5pt)}
\tikzstyle vertexB=[circle,fill=blue,minimum size=12pt]
\tikzstyle vertexBR=[circle,fill=green!95!black,minimum size=12pt]
\tikzstyle vertexG=[circle,fill=black,minimum size=10pt]
\begin{tikzpicture}[line width=1pt, scale = 1.0, color = black] %!45!white] 
\draw (18:2cm) -- (90:2cm) -- (162:2cm) -- (234:2cm) -- (306:2cm) -- cycle;
\draw[line width=3pt] (90:3.0cm) -- (128:1.62cm) -- (54:1.62cm) -- cycle;
\draw[line width=3pt] (54:1.62cm) -- (128:1.62cm) -- (200:1.62cm) -- (272:1.62cm) -- (344:1.62cm) -- cycle;
\draw (90:1.65cm) node {$u$};
\foreach \a in {18, 90, 162, 234, 306}
{
\draw[line width=3pt] (\a:2cm) .. controls (\a+36:2.25cm) .. (\a+72:2cm);
}
%\draw[orange] (18:2cm) .. controls (54:2.3cm) .. (90:2cm);
\draw (90:3.5cm) -- (90:2cm);
\foreach \a in {18, 90, 162, 234, 306}
{
\draw (\a:2cm) -- (\a:2.5cm);
\draw (\a:2cm) node[vertexG] {};
\draw (\a-36:1.62cm) node[vertexG] {};
}
\draw (90:3.0cm) node[vertexG] {};
\end{tikzpicture}
}
\caption{An example of the subgraphs $K$ and $K^{(2)}$, when $u$ lies on $C'$.  The thin lines denote edges of the graph $G$ and the thick lines denote edges of $G^{(2)}$, and, specifically, of $K^{(2)}$.  Vertex $u$ lies in the first component, which is a 5-cycle.  The second component is a ``5-cycle with a hat''.}
\label{K-pic}
\end{figure}

Now we assume instead that $u$ does not lie on $C'$; let $x$ denote the vertex of degree 3\ in $K$.  The main idea in this case is the same as the previous case, but now we have a few more pesky details.  Unfortunately, the present case can occur if $d_G(x)=3$ and all three edges incident to $x$ in $H$ came via rule (H2), rather than (H1)).

A picture in the present case would look similar to the case shown in Figure~\ref{K-pic}.
The only difference is that the first and second components now each have a ``tail'' (path hanging off a single vertex); in the first component the tail attaches at $x$ and in the second component it attaches at the neighbor (in $G$) of $x$ that is not on $C'$ (the ``top of the hat'' in Figure~\ref{K-pic}).
Note that the path from $x$ to $u$ in $G$ is of even length; this is true for the same reason that $C'$ is an even cycle.  Hence, vertex $u$ is in the first component and the vertices in $N_2[u]-u$ are in the second component.  

Starting from our coloring of  $\widehat{G}^{(2)}\setminus N_2[u]$, we uncolor all vertices of the second component. % on the path from $u$ to $C$.  
We now greedily color the uncolored vertices of the second component that are not on $C'$ in order of decreasing distance from $C'$ (as we show in the next paragraph, this uses at most 6 colors).  Finally, we uncolor the vertices of $K$ in the first component. 
This accomplishes our first goal, i.e., it yields a coloring of $\widehat{G}^{(2)}-V(K)$ (except that the vertices in the tail of the first component are already colored).
%The vertices in the second component that remain uncolored are exactly those shown in Figure~\ref{K-pic}---the cycle with a hat.  In contrast, the vertices of the first component that remain uncolored are those on both the cycle and the tail.

%By the minimality of $G$, we have a proper 6-coloring of $\widehat{G}^{(2)}-K$.  
Now we work toward our second goal (extending the above coloring of $\widehat{G}^{(2)}-V(K)$).
Let $L(v)$ denote the list of remaining available colors at each uncolored vertex $v$ of $K$.  
By Lemma~A and Theorem~B, to complete the coloring of $\widehat{G}^{(2)}$, it suffices to show that each component of $K^{(2)}$ either 
contains a vertex $w$ with $|L(w)| > d_{K^{(2)}}(w)$ or contains a block that is neither a clique nor an odd cycle.
Rule (H3) implies that $\widehat{d}(v)\le 6$ for each $v\in V(H)$.
Since each vertex $v$ of $K$ has $\widehat{d}(v)\leq 6$ and we are allowed 6 colors for our injective coloring of $G$, we thus have $|L(v)|\geq d_{K^{(2)}}(v)$ for each vertex $v$.

Since $u$ is counted by either $V_{2,2,2,3}$ or $V_{2,2,2,2}$, we have $\widehat{d}(u)< 6$; hence, we conclude that $d_{K^{(2)}}(u) < |L(u)|$.  Thus, by Lemma~A, we can extend the coloring of $\widehat{G}^{(2)}-V(K)$ to the first component.  Clearly, the second component contains a cycle $E$.  Note that the two neighbors of $x$ that lie on $E$ (and are adjacent to each other in $E$) also have a common neighbor in $K^{(2)}$; hence, the second component contains a block that is not a cycle or a clique.  Thus, by Theorem~$B$, we can extend the coloring of $\widehat{G}^{(2)}-V(K)$ to the second component.  This accomplishes our second goal.  Hence, we have shown that if the surplus is negative, then $\widehat{G}^{(2)}$ contains a reducible configuration.
\smallskip

We now show that if the surplus is nonnegative, then the average degree in $G$ is at least $\frac{14}5$.  We must verify that after each leaf in $H$ gives a charge of $\frac15$ to the bank and each vertex in $H$ counted by $V_{2,2,2,3}$ or $V_{2,2,2,2}$ receives charge from the bank, every vertex has charge at least $\frac{14}5$. Note that if $d_G(v)\le 2$, then $v\notin H$.
To denote the charge at each vertex $v$ after the second discharging phase, we write $\mu^{**}(v)$.  For the analysis that follows, recall that rule (H3) can only apply to a vertex $v$ if $d_G(v)=4$ and $\widehat{d}(v)\ge 7$.

First we consider a vertex $v\in V(H)$ such that $d_G(v)=3$.  Suppose that $d_H(v)=1$.  
Recall that each 2-vertex that is adjacent to $v$ in $G$ corresponds to an edge incident to $v$ in $H$.  Since $d_H(v)=1$, $v$ is adjacent in $G$ to at most one 2-vertex.  Further, if $v$ is adjacent to a 2-vertex, then $v$ is not adjacent to a 3-vertex (since this would imply $d_H(v)\ge 2$).
%If $v$ is adjacent to both a 2-vertex and a 3-vertex, then $\widehat{d}(v)\le 6$, so rule (H3) does not apply to $v$; hence $\mu^{**}(v)=\mu^*(v)\geq\frac{14}5$.
%further, if $v$ is adjacent in $G$ to a 2-vertex, then $v$ is not adjacent in $G$ to any 3-vertex. 
Hence, either $v$ is adjacent in $G$ to one 2-vertex and two 4-vertices or $v$ is not adjacent in $G$ to any 2-vertices.  
In each case, $\mu^*(v)=3$, so $v$ can give charge $\frac15$ to the bank, ending with charge $\mu^{**}(v)=3-\frac15 = \frac{14}5$.  

Now suppose that $d_G(v)=3$ and $d_H(v)\geq 2$.  Either $v$ is adjacent in $G$ to a 2-vertex, a 3-vertex, and a 4-vertex, or $v$ is adjacent in $G$ to at least two 3-vertices and to no 2-vertices.  In the first case $\mu^*(v)=3-1(\frac25)+1(\frac15) = \frac{14}5$, and in the second case $\mu^*(v) = \mu(v) = 3$.
%Note further that in the first case, $d_{G^{(2)}}(v)\leq 6$ and in the second case, $d_{G^{(2)}}(v)\leq 7$.  
%However, in the second case each 3-vertex that is adjacent to $v$ in $H$ is adjacent to a 2-vertex in $G$ that is deleted in $\widehat{G}$; so we have $\widehat{d}(v)\le5$.  Hence, in each case $\widehat{d}(v)\le 6$, so rule (H3) never applies to a vertex $v\in V(H)$ such that $d_G(v)=3$. 
Observe that $v$ does not give away charge in the second discharging phase.
So in both cases, $\mu^{**}(v)=\mu^*(v)\geq\frac{14}5$.
%(Note that rule (H3) never applies to a vertex $v$ with $d_G(v)=3$.)

Now we consider a vertex $v\in V(H)$ such that $d_G(v)=4$.
If vertex $v$ is adjacent in $G$ to at least three 2-vertices, then $\widehat{d}(v)\le 6$, so rule (H3) does not apply to $v$.  
Hence, if $v$ is counted by $V_{2,2,2,3}$, then $\mu^*(v) \geq 4 - 3(\frac25) - 1(\frac15) = \frac{13}5$ and $\mu^{**}(v) = \mu^*(v) + \frac15 = \frac{14}5$; similarly, if $v$ is counted by $V_{2,2,2,2}$, then $\mu^*(v) = 4 - 4(\frac25) = \frac{12}5$ and $\mu^{**}(v) = \mu^*(v) + \frac25 = \frac{14}5$.
If, during the initial discharging phase, $v$ only gave charge to two 2-vertices (and no 3-vertices), then $v$ has sufficient charge to give to the bank if it is split by rule (H3): $\mu^{**}(v)\geq \mu^*(v) - 2(\frac15) = 4 - 2(\frac25) - 2(\frac15) = \frac{14}5$.  Hence, we need only consider the case when during the first discharging phase $v$ gave charge to at most two 2-vertices and at least one 3-vertex.  We examine three subcases.

If $v$ is adjacent in $G$ to two 2-vertices and two 3-vertices, then $\widehat{d}(v)\leq6$, so rule (H3) does not apply to $v$; hence $\mu^{**}(v) = \mu^*(v) = 4 - 2(\frac25) - 2(\frac15) = \frac{14}5$.  If $v$ is adjacent to at most one 2-vertex, then after the initial discharging phase, $\mu^*(v)\geq 4-\frac25-3(\frac15)=3$, so $\mu^{**}(v) = \mu^*(v)-\frac15 = \frac{14}5$.  Finally, suppose that $v$ gave charge to two 2-vertices and one 3-vertex.  If the final neighbor of $v$ is a 4-vertex, then $d_{G^{(2)}}(v)=7$.  However, the 3-vertex adjacent to $v$ is also adjacent to a 2-vertex $u$.  Because $d_{G^{(2)}}(u)\leq 5$, we have $\widehat{d}(v)\leq 6$, so rule (H3) does not apply to $v$.  Hence $\mu^{**}(v) = \mu^*(v) = 4 - 2(\frac25) - 1(\frac15) = 3$.
\end{proof}

The process of converting a discharging proof into an efficient coloring algorithm is well understood (see~\cite{ck} Section 6).  We repeatedly remove reducible configurations, until we reach the empty graph.  We then reconstruct the graph, by adding back the reducible configurations in reverse order, and extending the coloring as we go.  To make this algorithm efficient, at each step we must find a reducible configuration quickly.  If all of our reducible configurations are of bounded size, then we can repeatedly find them in constant amortized time;
 this yields a linear running time.
However, some complications arise here, since the subgraph $K$ may be arbitrarily large.
It is straightforward to overcome these difficulties with an algorithm that runs
in quadratic time.  With more care (and a better choice of data structures), the algorithm can be made to run in $n\log n$ time, where the input graph has $n$ vertices.
\smallskip

The proof of Lemma~\ref{main5} is similar to the proof of Lemma~\ref{main4}, but slightly more complicated.  The additional obstacle we must address in the next proof is verifying that each 5-vertex has sufficient charge.  The additional asset we have is that we are allowed to use 7 colors (rather than the 6 colors allowed in Lemma~\ref{main4}).

\begin{lemma}
If $\Delta(G)=5$ and $\mad(G) < \frac{14}5$, then $\chi_i(G)\leq 7$.
\label{main5}
\end{lemma}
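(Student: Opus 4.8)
The plan is to follow the proof of Lemma~\ref{main4} essentially verbatim, upgrading every ``$\Delta=4$, $6$ colors'' ingredient to ``$\Delta=5$, $7$ colors.'' So I would take $G$ to be a minimal counterexample and first record a list of reducible configurations: (RC1) a 1-vertex, (RC2) adjacent 2-vertices, (RC3) a 3-vertex adjacent to two or three 2-vertices, and analogues of (RC4) and (RC5) forbidding a 3-vertex adjacent to a 2-vertex and two 3-vertices, and two adjacent 3-vertices each adjacent to a 2-vertex. I expect we also need one or two new configurations controlling how many 2-vertices can cluster at a 5-vertex (for instance, a 5-vertex all of whose neighbors are 2-vertices, or a 5-vertex with four 2-neighbors whose far endpoints are not all $\Delta$-vertices). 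Each configuration should be reducible by the same greedy color-counting argument used before; as in Lemma~\ref{main4} we pass to the subgraph $\widehat{G}^{(2)}$ obtained from $G^{(2)}$ by deleting all 2-vertices of $G$ of degree at most $6$ in $G^{(2)}$ (these can be colored last), and we aim to keep $\widehat{d}(v)\le 7$ on every vertex we must color, matching the $7$ available colors.

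The first discharging phase is the one from Lemma~\ref{main4}: assign $\mu(v)=d(v)$, let every $3^+$-vertex send $\frac25$ to each adjacent 2-vertex, and let every $4^+$-vertex send $\frac15$ to each adjacent 3-vertex that itself has a 2-neighbor. Afterwards each 2-vertex sits at exactly $\frac{14}5$ and, using the analogue of (RC4) to supply the required $4^+$-neighbor, each 3-vertex at least $\frac{14}5$, while 4-vertices and some 5-vertices may fall short. I would then build the auxiliary graph $H$ via rules (H1) (a 2-vertex with $G$-neighbors $v,w$ contributes edge $vw$ to $H$), (H2) (a 3-vertex adjacent to a 3-vertex and to a 2-vertex contributes that edge), and an analogue of (H3) splitting each $v\in V(H)$ with $\widehat{d}(v)\ge 8$ into leaves hanging off its $H$-neighbors, so that $\widehat{d}(v)\le 7$ for every surviving $v\in V(H)$. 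As in Lemma~\ref{main4}, a key preliminary computation pins down which vertices this split can reach: a 5-vertex with five 2-neighbors has $\widehat{d}(v)\le 5$, and a 5-vertex with four 2-neighbors and a 3-neighbor that itself has a 2-neighbor has $\widehat{d}(v)\le 6$, so the split reaches only 4-vertices and certain 5-vertices with at most four 2-neighbors.

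Next comes the second discharging phase with a bank: every leaf of $H$ pays $\frac15$ to the bank, and the bank refunds the deficient 4- and 5-vertices according to their $G$-neighborhoods (the analogues of rules (R2.2) and (R2.3), now also covering 5-vertices adjacent to four or five 2-vertices). I would then define the surplus as the bank's net charge and argue, exactly as in Lemma~\ref{main4}, that a negative surplus is negative on some component $J$ of $H$, that $J$ then has average degree greater than $2$ and hence contains a cycle together with a high-$H$-degree needy vertex $u$, and that the subgraph $K$ (a short cycle $C'$ in $G$ realizing a cycle of $H$, plus a shortest path to $u$) yields a reducible configuration: color $G^{(2)}\setminus N_2[u]$ by minimality, restrict to $\widehat{G}^{(2)}$, uncolor $K$, and extend using Lemma~A on the first component of $K^{(2)}$ (where $d_{K^{(2)}}(u)<|L(u)|$ because $u$ is needy) and Theorem~B on the second component (which contains a block that is neither a clique nor an odd cycle), using $\widehat{d}(v)\le 7$ and the $7$ colors for the remaining vertices of $K$. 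If instead the surplus is nonnegative, a vertex-by-vertex check shows every vertex ends with charge at least $\frac{14}5$, contradicting $\mad(G)<\frac{14}5$.

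The main obstacle, and the place where the argument genuinely departs from Lemma~\ref{main4}, is the final charge check for 5-vertices. A 5-vertex may send $\frac25$ to each of up to five 2-neighbors in the first phase and may then owe as much as $\frac45$ more to the bank when split, so a crude estimate leaves it near $\frac{13}5$, which is too small. Closing the gap requires matching the cases in which the split rule fires against the cases in which a 5-vertex is deficient: one must verify that a 5-vertex with five 2-neighbors is never split (its $\widehat{d}$ is too small) and that whenever a 5-vertex with fewer 2-neighbors is split it either retains enough slack from the first phase or qualifies for a bank refund whose cost is absorbed by the surplus. Making the reducible configurations, the split threshold, and the refund rules mutually consistent so that this case analysis closes is the heart of the proof; the rest is a routine translation of Lemma~\ref{main4}.
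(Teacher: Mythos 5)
Your overall architecture matches the paper's: two discharging phases, an auxiliary graph $H$ with a splitting rule for high-degree vertices of $G^{(2)}$, a bank whose negative surplus forces a cycle plus a needy vertex, and the subgraph $K$ made reducible via Lemma~A (at the needy vertex $u$, where $d_{K^{(2)}}(u)<|L(u)|$) and Theorem~B (on the second component). You also correctly locate the crux: the 5-vertices. But you do not resolve that crux --- you explicitly defer ``making the reducible configurations, the split threshold, and the refund rules mutually consistent'' --- and the one mechanism you do sketch for it, namely extending the bank refunds to needy 5-vertices with four or five 2-neighbors, runs into a concrete obstruction. If a refunded vertex $u$ can be a 5-vertex adjacent to four 2-vertices, then the reducibility argument for $K$ needs $u$ to have degree less than $7$ in the pruned square so that Lemma~A applies to the first component; but such a $u$ can have $d_{G^{(2)}}(u)=8$ (four far endpoints of the 2-threads plus up to four distance-2 vertices through its fifth neighbor), and your pruning rule --- which deletes only 2-vertices of $G^{(2)}$-degree at most $6$ --- does not bring this down. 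So the single place where your proof must depart from Lemma~\ref{main4} is exactly the place left open.

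The paper closes this gap with a cluster of interlocking devices, none of which appear in your proposal: (i) in the first phase, every 5-vertex also sends $\frac15$ to each adjacent 4-vertex (rule (R1.3)), so that rich 4-vertices exist near needy 5-vertices; (ii) the edge-creation rule (H1) is restricted to 2-vertices having a \emph{4-vertex} endpoint, which guarantees that the only vertices ever counted by $V_{2,2,2,3}$ or $V_{2,2,2,2}$ --- and hence the only candidates for the needy vertex $u$ in the $K$-argument --- are 4-vertices, for which the bound $\tilde{d}(u)<7$ does hold; (iii) needy 5-vertices are rescued not by the bank but by a new transfer rule (R2.4) in which each sufficiently rich 4-vertex sends $\frac1{15}$ to each 5-vertex sharing a 2-vertex neighbor with it; and (iv) the pruned square $\tilde{G}^{(2)}$ is formed by deleting \emph{all} vertices of $G^{(2)}$-degree at most $6$, not just 2-vertices, which is what lets the case analysis certify $\tilde{d}(v)\le 7$ for the 4- and 5-vertices that remain. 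Your preliminary observations (a 5-vertex with five 2-neighbors is never split; the surplus-versus-leaves counting still forces a cycle) are correct as far as they go, but without the four devices above the final charge check for 5-vertices and the Lemma~A step for $K$ do not both go through, so as written the proof has a genuine gap at its acknowledged heart.
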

\begin{proof}
Suppose the lemma is false; let $G$ be a minimal counterexample.
The following four configurations are reducible.
Proofs for (RC1) -- (RC3) are given in the proof of Theorem~\ref{Delta3}.  A proof for (RC4) is straightforward and left to the reader.

%\noindent
\begin{enumerate}
\item[(RC1)] a 1-vertex.
\item[(RC2)] adjacent 2-vertices.
\item[(RC3)] a 3-vertex adjacent to two or three 2-vertices.
\item[(RC4)] a 3-vertex adjacent to one 2-vertex and two other vertices $u$ and $v$ with $d(u)+d(v)\leq 7$.
\end{enumerate}

In the first discharging phase, we apply the following three discharging rules:
\smallskip

\begin{enumerate}
\item[(R1.1)] Every $3^+$-vertex gives $\frac25$ to each adjacent 2-vertex.
\item[(R1.2)] If $u$ is a 3-vertex adjacent to two 4-vertices and a 2-vertex, then each adjacent 4-vertex gives $\frac15$ to $u$.
\item[(R1.3)] Every 5-vertex gives $\frac25$ to each adjacent 3-vertex that is adjacent to a 2-vertex and gives $\frac15$ to each adjacent 4-vertex.
\end{enumerate}

We consider the charges after the first discharging phase.
\smallskip

2-vertex: 
Configurations (RC1) and (RC2) together imply that each neighbor of a 2-vertex is a $3^+$-vertex.
Thus, $\mu^*(v) = 2 + 2(\frac25) = \frac{14}5$.

3-vertex: If $v$ is adjacent to a 2-vertex, then by (RC4) $v$ is either adjacent to two 4-vertices or adjacent to a 5-vertex.  In the first case, $\mu^*(v)=3 - \frac25 + 2(\frac15) = 3$.  In the second case, $\mu^*(v)=3 - \frac25 + \frac25 = 3$.  Otherwise, $\mu^*(v) = \mu(v) = 3$.

4-vertex: $\mu^*(v) \geq 4 - 4(\frac25) = \frac{12}5$.

5-vertex: $\mu^*(v)\geq 5 - 5(\frac25) = 3$.
\bigskip

%Note that every 2-vertex charge at least $\frac{14}5$, every 3-vertex and 5-vertex has charge at least 3, but 4-vertices can have insufficient charge.  We now construct an auxiliary graph $H$.  If $H$ is acyclic, then we will show how to finish the discharging argument.  If we cannot complete the discharging argument, then we will use $H$ to show that $G$ contains a reducible configuration.  More specifically, we construct $H$ so that every cycle in $H$ corresponds to a cycle in $G$ in which each vertex $v$ satisfies $d_{G^{(2)}}(v)\leq 7$.
%
%Each vertex of degree 1\ in $H$ will contribute $\frac15$ to a bank (these will be vertices with excess charge after the initial discharging phase).
%Each vertex that has insufficient charge after the initial discharging phase will have degree 3 or 4\ in $H$, and will receive the necessary charge from the bank.  
%
For convenience, we introduce a subgraph $\tilde{G}^{(2)}$ of $G^{(2)}$.  We form $\tilde{G}^{(2)}$ from $G^{(2)}$ by deleting all vertices of $G$ that have degree at most 6\ in $G^{(2)}$; we can greedily color these vertices after all others.  
We denote the degree of a vertex $v$ in $\tilde{G}^{(2)}$ by $\tilde{d}(v)$.  
(Note the subtle difference from the proof of Lemma~\ref{main4}: to form $\widehat{G}^{(2)}$ we only deleted 2-vertices, but now we delete all vertices with $\tilde{d}(v)\leq 6$.  This change is necessary to accomodate the 5-vertices.)
Hence, it suffices to properly color $\tilde{G}^{(2)}$.  
Again we construct an auxiliary graph $H$, to help finish the discharging argument.
We construct $H$ by the two following rules:
%\bigskip
\begin{enumerate}
\item[(H1)] If $u$ is a 2-vertex adjacent to a 4-vertex $v$ and also adjacent to $w$, then $v,w\in V(H)$ and $vw\in E(H)$.

\item[(H2)] If $v\in V(H)$ and 
$\tilde{d}(v)\ge 8$,
then we split $v$ into multiple copies in $H$, as follows.  For each edge $e$ incident to $v$ in $H$, we create a new vertex $v_e$ that is incident only to edge $e$, then we delete the original copy of $v$ in $H$.  
%\item[(S3)] If $e\in E(H)$ is an isolated edge, delete it.  Furthermore, if $uv\in E(H)$, $d_H(v)=1$ and $d_G(u)=3$, then delete edge $uv$ from $H$.
\end{enumerate}
%\bigskip

Now we have a second discharging phase, with the following four rules:  
\begin{enumerate}
\item[(R2.1)] Each vertex of degree 1\ in $H$ gives a charge of $\frac15$ to the bank.  
(So, if $v$ was split into $k$ vertices by rule (H2), then $v$ gives a charge of $\frac{k}5$ to the bank.)
\item[(R2.2)] If a vertex $v$ is in $H$ and in $G$ vertex $v$ is adjacent to three vertices of degree 2 and a vertex of degree 3, then the bank gives $v$ a charge of $\frac15$.  
\item[(R2.3)] If a vertex $v\in V(H)$ and in $G$ vertex $v$ is adjacent to four vertices of degree 2, then the bank gives $v$ a charge of $\frac25$.  
\item[(R2.4)] If a 4-vertex $v$ has charge at least 3 after applying rules (R2.1), (R2.2), (R2.3), then $v$ sends charge $\frac1{15}$ to each 5-vertex $v$ at distance 2 that has a common neighbor $w$ with $u$ such that $d_G(w)=2$.
\end{enumerate}

Let $V_{2,2,2,3}$ denote the number of 4-vertices in $G$ that are adjacent to three vertices of degree 2 and one vertex of degree 3; similarly, let $V_{2,2,2,2}$ denote the number of 4-vertices in $G$ that are adjacent to four vertices of degree 2.  Let $\ell$ denote the number of leaves in $H$.  At the end of the second discharging phase, the bank has a surplus equal to $\frac15(\ell - V_{2,2,2,3} - 2V_{2,2,2,2})$.  We will show that if the surplus is negative, then $G$ contains a reducible configuration and if the surplus is nonnegative, then every vertex of $G$ has charge at least $\frac{14}5$ (which contradicts $\mad(G)<\frac{14}5$).

First, we assume the surplus is negative.
Note that if the surplus is negative, then it must be negative when restricted to
 some component $J$ of $H$.  Observe that each vertex counted by $V_{2,2,2,3}$ has degree 3\ in $H$ and each counted by $V_{2,2,2,2}$ has degree 4\ in $H$.  Thus,
 if the surplus is negative when restricted to $J$, then $J$ has average degree greater than 2.  Hence, $J$ contains a cycle $C$ and at least one vertex $u$ counted by either $V_{2,2,2,3}$ or $V_{2,2,2,2}$.
Recall that $N_2[u]$ is the set consisting of vertex $u$ and all adjacent 2-vertices.  By the minimality of $G$, we have an injective 7-coloring of $G\setminus N_
2[u]$ (note that $(G\setminus N_2[u])^{(2)} = G^{(2)}\setminus N_2[u]$); equivalently, this is a proper coloring of $G^{(2)}\setminus N_2[u]$.

Let $C'$ be the shortest cycle in $G$ that contains all the vertices of $V(C)$ in
 the order in which they appear in $C$; thus, $V(C')$ contains $V(C)$, as well as
 some additional 2-vertices.
Let $K$ be the subgraph of $G$ consisting of $C'$ and a shortest path from $C'$ to $u$ (including $u$); if $u$ lies on $C'$, then we also include in $K$ a 2-vertex that is adjacent to $u$, but that is not responsible for any edge of $C$.
Our proper coloring of $G^{(2)}\setminus N_2[u]$ can naturally be restricted to a proper coloring of $\tilde{G}^{(2)}\setminus N_2[u]$.  We will first modify the coloring of $\tilde{G}^{(2)}\setminus N_2[u]$ to get a proper coloring of $\tilde{G}^{(2)}-V(K)$, then show how to extend this coloring to $\tilde{G}^{(2) }$.
We call these objectives our first and second goals.

%If $u$ lies on $C'$, then at most one vertex $w$ of $N_2[u]$ is not in $K$.  Beginning with our coloring of $\widehat{G}^{(2)}\setminus N_2[u]$, we greedily color $w$, then uncolor the vertices of $K$; this yields a coloring of $\widehat{G}^{(2)}-V(K)$.  Thus, if $u$ lies on $C'$, we achieve our first goal.

If $u$ lies on $C'$, then at most one vertex $w$ of $N_2[u]$ is not in $K$.  Beginning with our coloring of $\tilde{G}^{(2)}\setminus N_2[u]$, we greedily color $w$, then uncolor the vertices of $K$; this yields a coloring of $\tilde{G}^{(2)}-V(K)$.  Thus, if $u$ lies on $C'$, we achieve our first goal.
It's important to notice (and we explain it further in the next paragraph) that $C'$ is an even cycle in $G$, and hence $V(C')$ forms two disjoint cycles in $\tilde{G}^{(2)}$.  
Thus $K^{(2)}$ consists of two components.
Let $x$ denote the vertex of degree 3\ in $K$.  
We call the component of $K^{(2)}$ that includes $x$ the \textit{first component} and we call the other component of $K^{(2)}$ the \textit{second component}.  

%If $u$ lies on $C'$, then at most one vertex $w$ of $N_2[u]$ is not in $K$.  Beginning with our coloring of $\tilde{G}^{(2)}\setminus N_2[u]$, we greedily color $w$, then uncolor the vertices of $K$; this yields a coloring of $G^{(2)}-V(K)$.

%
We now assume that $u$ does not lie on $C'$.
Again $C'$ is an even cycle, and hence $V(C')$ forms two disjoint cycles in $G^{(2)}$.  This observation follows directly from the fact that each edge of $H$ is constructed by rule (H1), and therefore corresponds to two successive edges on $C'$.
%the key observation is that because of (RC5), if $C'$ contains an edge created by (H2), then $C'$ contains two successive such edges, yet $C'$ must not contain three successive such edges, since this would force an instance of (RC4).
%Let $x$ denote the vertex of degree 3\ in $K$.  
%We call the component of $K^{(2)}$ that includes $x$ the \textit{first component} and we call the other component of $K^{(2)}$ the \textit{second component}.  
Note that the path from $x$ to $u$ in $G$ is also of even length; this is true for the same reason that $C'$ is an even cycle.  Hence, vertex $u$ is in the first component and the vertices in $N_2[u]-u$
are in the second component.
Starting from our coloring of  $\tilde{G}^{(2)}\setminus N_2[u]$, we uncolor all vertices of the second component.   Thus, if $u$ does not lie on $C'$, then we achieve our first goal.

Now we work toward our second goal, extending the partial coloring to $\tilde{G}^{(2)}$.
Let $L(v)$ denote the list of remaining available colors at each vertex $v$.  
Rule (H2) implies that $\tilde{d}(v)\le 7$ for each $v\in V(H)$.
Since each vertex $v$ of $K$ has $\tilde{d}(v)\leq 7$ and we are allowed 7 colors for our injective coloring of $G$, we thus have $|L(v)|\geq d_{K^{(2)}}(v)$ for each vertex $v$.  
By Lemma~A and Theorem~B, to complete the coloring of $\tilde{G}^{(2)}$, it suffices to show that each component of $K^{(2)}$ either 
contains a vertex $w$ with $|L(w)| > d_{K^{(2)}}(w)$ or contains a block that is neither a clique nor an odd cycle.

Since $u$ is counted by either $V_{2,2,2,3}$ or $V_{2,2,2,2}$, we have $\tilde{d}(u)< 7$; hence, we conclude $d_{K^{(2)}}(u) < |L(u)|$.  Thus, we can extend the coloring of $\tilde{G}^{(2)}-V(K)$ to the first component.  Clearly, the second component contains a cycle $E$.  Note that the two neighbors of $x$ that lie on $E$ (and are adjacent to each other in $E$) also have a common neighbor in $K^{(2)}$; hence, the second component contains a block that is not a cycle or a clique.  Thus, we can extend the coloring of $\tilde{G}^{(2)}-V(K)$ to the second component.  This achieves our second goal.  Hence, we have shown that if the surplus is negative, then $\tilde{G}^{(2)}$ contains a reducible configuration.
\smallskip

We now show that if the surplus is nonnegative, then the average degree in $G$ is at least $\frac{14}5$.  We must verify that after each leaf in $H$ gives a charge of $\frac15$ to the bank and each vertex in $H$ counted by $V_{2,2,2,3}$ or $V_{2,2,2,2}$ receives charge from the bank, every vertex has charge at least $\frac{14}5$.
To denote the charge at each vertex $v$ after the second discharging phase, we write $\mu^{**}(v)$.

First, we consider a vertex $v\in V(H)$ such that $d_G(v)=3$.  
Note that $d_H(v)\le 1$, since $d_H(v)\ge 2$ would imply that in $G$ vertex $v$ is adjacent to at least two 2-vertices, which contradicts (RC3).
So suppose that $d_H(v)=1$.  
Clearly, $v$ is adjacent to a 2-vertex in $G$.  If $v$ is also adjacent to a 5-vertex, then $\mu^*(v)\geq 3 - \frac25 + \frac25 = 3$.  If $v$ is not adjacent to a 5-vertex, then by (RC3) and (RC4), $v$ must be adjacent to two 4-vertices; hence, $\mu^*(v)\ge 3 - \frac25 + 2(\frac15) = 3$.
In each case, $v$ has charge at least 3 after the initial discharging phase, so $v$ can give charge $\frac15$ to the bank.  

Now, we consider a vertex $v\in V(H)$ such that $d_G(v)=4$.
We must verify that for each such vertex, either $\tilde{d}(v)\leq 6$ or $v$ is able to give sufficient charge to the bank after it is split by rule (H2).  If in $G$ vertex $v$ is adjacent to at least three 2-vertices, then $\tilde{d}(v)\leq 7$.  If in the initial discharging phase, $v$ has only given charge to two 2-vertices (and no 3-vertices), then $v$ has sufficient charge to give to the bank if it is split by rule (H2).  Hence, we need only consider the case when during the first discharging phase $v$ has given charge to at most two 2-vertices and at least one 3-vertex.  
Note, as follows, that rule (R2.4) will never cause the charge of a 4-vertex $v$ to drop below $\frac{14}5$.  If a 4-vertex gives charge by rule (R2.4) to at most three 5-vertices, then $\mu^{**}(v)\geq 3-3(\frac1{15})=\frac{14}5$.  However, if $v$ gives charge by rule (R2.4) to four  5-vertices, then $\mu^{**}(v)=\mu^*(v)-4(\frac1{15})+\frac25 > \frac{14}5$.  Hence, in what follows, we do not consider rule (R2.4).
We examine three subcases.

If $v$ is adjacent in $G$ to two 2-vertices and two 3-vertices, then $\tilde{d}(v)\leq 6$.  If $v$ is adjacent to at most one 2-vertex, then after the initial discharging phase, $v$ has charge at least $4-\frac25-3(\frac15)=3$, so $v$ is able to give charge $\frac15$ to the bank.  Finally, suppose that $v$ has given charge to two 2-vertices and one 3-vertex.  %If the final neighbor of $v$ is a 4-vertex, then $d_{G^{(2)}}(v)=7$.  
Observe that the 3-vertex adjacent to $v$ is also adjacent to a 2-vertex $u$.  Because $d_{G^{(2)}}(u)\leq 6$, we see that $\tilde{d}(v)\leq7$.

Finally, we consider a vertex $v\in H$ such that $d_G(v)=5$.
If $v$ is adjacent in $G$ to at most three 2-vertices and at most four $3^-$-vertices, then $\mu^{**}(v)\ge \mu^*(v)-3(\frac15)\geq 5 - 4(\frac25)-3(\frac15)=\frac{14}5$.
Suppose instead that $v$ is adjacent to five $3^-$-vertices.  If $v$ is adjacent to at least three 2-vertices, then $\tilde{d}(v)\leq 7$, so $v$ is not split by rule (H2).  Thus, $\mu^{**}(v)\geq 5 - 5(\frac25)=3$.
If $v$ is adjacent to five $3^-$-vertices and at least three of them are 3-vertices, then we have the following analysis. 
If $v$ is not split by rule (H2), then $\mu^{**}(v)\geq 5 - 5(\frac25)=3$; hence, we assume that $v$ is split by (H2), which implies that $\tilde{d}(v)\ge 8$.  This inequality  implies that at least three 3-vertices that are adjacent to $v$ are not adjacent to 2-vertices (if such a 3-vertex is adjacent to a 2-vertex $u$, then $d_{G^{(2)}}(u)\leq 6$, so $u$ does not contribute to $\tilde{d}(v)$).  
Hence, these 3-vertices do not receive charge from $v$, so we conclude that $\mu^{**}(v)\geq 5 - 2(\frac25)-2(\frac15)=\frac{19}5 > \frac{14}5$.
%[Recall that a 3-vertex $u$ that is adjacent to $v$ only receives charge from $v$ if $u$ is also adjacent to a 2-vertex. then it gets no charge: good.  If adjacent 3-vert has adj 2-vert u, then $d_{G^{(2)}}(u)\leq 6$, so $u$ does not count toward $\tilde{d}(v)$: good.]

So $v$ must be adjacent to exactly four $3^-$-vertices, and all of these $3^-$-vertices are 2-vertices.  
Consider $d_H(v)$ before we apply rule (H2).  Each edge incident in $H$ to $v$ corresponds to a 2-vertex in $G$ that is  adjacent to $v$ and is also adjacent to a 4-vertex $u$.  If at least two of these 4-vertices have $d_{G^{(2)}}(u)\leq 6$, then  
$\tilde{d}(v)\le 6$, and $v$ is not split by (H2).  
Suppose one such 4-vertex $u$ has $d_{G^{(2)}}(u)\geq 7$. Either $u$ is adjacent to at most two 2-vertices, or $u$ is adjacent to three 2-vertices and one 5-vertex; in both cases, $\mu^*(u)\geq 3$, so $u$ gives charge $\frac1{15}$ to $v$.
Hence, if at least three of these 4-verts have $d_{G^{(2)}}\ge 7$, then $v$ gets charge $\frac1{15}$ from each, so $\mu^{**}(v)\geq 5 - 5(\frac25)-2(\frac15)+3\frac1{15} = \frac{14}5$.
\end{proof}

By combining Lemmas~\ref{main6}, \ref{main4}, and \ref{main5}, we prove Theorem~\ref{mainthm}.
\smallskip

Although we have stated our results only for injective coloring, all of our proofs yield the same bounds for injective list coloring (which is defined analogously).
Our proofs for the reducible configuration are already phrased in terms of list coloring.  Thus, we would only need to use minimality to assume an injective {\it list-}coloring of the subgraphs of our minimal counterexample, rather than simply an injective coloring, as we have done.

\section{Acknowledgements}
Thank you to Farhad Sahhrokhi, Amy Ksir, and an anonymous referee, for comments that improved the paper.  Thank you especially to Beth Kupin, whose very detailed reading of the paper and extensive comments substantially improved the exposition.

\end{document}